\newcommand{\BB}{{\mathcal B}}
\newcommand{\EE}{{\mathcal E}}
\newcommand{\LL}{{\mathcal L}}
\newcommand{\MM}{{\mathcal M}}
\newcommand{\BN}{{\mathbb N}}
\newcommand{\BR}{{\mathbb R}}
\newcommand{\fch}{{\mathbf{1}}}
\newtheorem{theorem}{\bf Theorem}[section]
\newtheorem{proposition}[theorem]{\bf Proposition}%[subsection]
\newtheorem{lemma}[theorem]{\bf Lemma}%[subsection]
\newtheorem{corollary}[theorem]{\bf Corollary}%[subsection]
\theoremstyle{definition}
\newtheorem{definition}[theorem]{Definition}
\newtheorem{example}[theorem]{Example}
\newtheorem{remark}[theorem]{Remark}
\numberwithin{equation}{section}
\begin{document}

\title{On semilinear elliptic equations with diffuse measures}
\author {Tomasz Klimsiak and Andrzej Rozkosz}
\date{}
\maketitle

\begin{abstract}
We consider semilinear equation of the form $-Lu=f(x,u)+\mu$,
where $L$ is the operator corresponding to a transient symmetric regular
Dirichlet form $\EE$, $\mu$ is a diffuse measure with
respect to the capacity associated with $\EE$, and the lower-order
perturbing term $f(x,u)$  satisfies the sign condition in $u$ and  some
weak integrability condition (no growth condition on $f(x,u)$ as a
function of $u$ is imposed). We prove the existence of a solution under mild additional assumptions on $\EE$. We also show that the solution is unique if $f$ is
nonincreasing in $u$.
\end{abstract}
{\small{\bf Keywords:} Semilinear elliptic equation, Dirichlet operator, measure data.}
\medskip\\
{\small{\bf Mathematics Subject Classification (2010)}. Primary 35J61; Secondary 35R06.}

\footnotetext{This work was supported by %Polish
Narodowe Centrum Nauki (Grant No.  2016/23/B/ST1/01543).}

\footnotetext{T. Klimsiak and A. Rozkosz: Faculty of Mathematics and Computer
Science, Nicolaus Copernicus University, Chopina 12/18, 87-100
Toru\'n, Poland. e-mail addresses: tomas@mat.umk.pl (T. Klimsiak), rozkosz@mat.umk.pl (A. Rozkosz).}

\section{Introduction}

Let $E$ be a locally compact separable metric space, $m$ be a
positive Radon  measure on $E$ such that $\mbox{supp\,}[m]=E$, and
let $(\mathcal{E},D(\EE))$ be a regular transient symmetric Dirichlet form on
$L^{2}(E;m)$. In this paper, we consider semilinear equations of the
form
\begin{equation}
\label{eq1.1} -Lu=f(\cdot,u)+\mu.
\end{equation}
In (\ref{eq1.1}), $f:E\times\BR\rightarrow\BR$ is a Carath\'eodory
function satisfying the  so-called ``sign condition":
\begin{equation}
\label{eq1.2} f(x,0)=0,\qquad f(x,y)y\le0,\quad x\in E,\,y\in\BR,
\end{equation}
and $\mu$ is a diffuse measure on $E$ with respect to the capacity
associated with $\EE$, i.e. a bounded signed Borel measure on $E$
which charges no set of capacity zero. As for $L$, we assume that
it is the operator corresponding to $\EE$, i.e. the unique
nonpositive self-adjoint operator on $L^2(E;m)$ such that
\[
D(L)\subset D(\EE),\qquad \EE(u,v)=(-Lu,v),\quad u\in D(L)\,,v\in
D(\EE),
\]
where $(\cdot,\cdot)$ stands for the usual scalar product in $L^2(E;m)$ (see \cite[Section 1.3]{FOT}). Problems of the form (\ref{eq1.1}) with $f$ satisfying the sign conditions are called absorption problems. The model examples of (\ref{eq1.1}) are
\begin{equation}
\label{eq1.3} -\Delta u=f(\cdot, u)+\mu\quad\mbox{in }D,\qquad
u=0\quad\mbox{on }\partial D,
\end{equation}
where  $E:=D$ is a bounded open subset of $\BR^d$ and $\Delta$ is the Laplace operator, and
\begin{equation}
\label{eq1.4} -\Delta^{\alpha/2}u=f(\cdot, u)+\mu\quad\mbox{in
}D,\qquad u=0\quad\mbox{on }\BR^d\setminus D,
\end{equation}
where $\Delta^{\alpha/2}$ is the fractional Laplace operator with $\alpha\in(0,2)$.

The study of problems of the form  (\ref{eq1.3}) with $\mu\in
L^1(D;dx)$ was  initiated by Brezis and Strauss \cite{BS}
(in fact, in \cite{BS} more general second-order elliptic differential operator is
considered). In \cite{BS} it is proved that if $f$ satisfies the sign condition
and
\begin{equation}
\label{eq1.5}
\forall a>0\quad F_a\in L^1(E;m),\quad \mbox{
where }F_a(x)=\sup_{|y|\le a} |f(x,y)|,\,x\in E
\end{equation}
with $E=D$, then there exists a solution to (\ref{eq1.3}) for  $\mu$ belonging to some class which is ``arbitrarily smaller" than $L^1(D;dx)$. If $f$ satisfies stronger monotonicity condition:
\begin{equation}
\label{eq1.6}
(f(x,y_1)-f(x,y_2))(y_1-y_2)\le0,\quad x\in E,\,y_1,y_2\in\BR,
\end{equation}
then  the  solution exists for any  $\mu\in L^1(D;dx)$ and is unique. Later, Gallou\"et and Morel \cite{GM} proved the existence of a solution to (\ref{eq1.3}) for any $\mu\in L^1(D;dx)$ and $f$ satisfying (\ref{eq1.2}), (\ref{eq1.5}). Orsina and Ponce \cite{OP}  have subsequently generalized and strengthened this result by showing that a  solution to (\ref{eq1.3}) exists for any diffuse measure $\mu$ and any $f$ satisfying (\ref{eq1.2}) and an integrability condition weaker than (\ref{eq1.5}).

Equations of the form (\ref{eq1.1}) in the case where $L$ is a general, possibly nonlocal, operator associated with a transient regular  Dirichlet form were considered by Klimsiak and Rozkosz  \cite{KR:JFA,KR:CM} in case $f$ satisfies the monotonicity condition, and by Klimsiak \cite{K:PA} in case $f$ satisfies the sign condition (in fact, in \cite{K:PA}  systems of equations  with right-hand side satisfying a generalized sign condition are considered).

In \cite{K:PA,KR:JFA,KR:CM} the proofs of the existence results rely heavily on probabilistic methods. In particular, we make  an extensive use of the theory of backward stochastic differential equations and we use some results from stochastic analysis and probabilistic potential theory. In the present paper we give new, rather short analytical proofs of some of the results of  \cite{K:PA,KR:JFA}. We are motivated  by the desire to make  them accessible
to people working in PDEs that are not familiar with probabilistic methods.

Let $D_e(\EE)$ denote the extended Dirichlet space of $(\EE,D(\EE))$.
In the present paper we provide a proof of the existence of a solution $u$ in the sense of duality (or, equivalently, renormalized solution; see Section \ref{sec3}) to (\ref{eq1.1}) for $f$ satisfying (\ref{eq1.2}) and (\ref{eq1.5}) under the following additional assumption on $\EE$:
%\begin{align}
%\label{eq1.7} &\mbox{if $\sup_{n\ge1}\EE(u_n,u_n)<\infty$, then
%there is a }
%\nonumber\\
%&\qquad\mbox{subsequence $(n_k)$ such that $\{u_{n_k}\}$ converges
%$m$-a.e.}
%\end{align}
\begin{align}
\label{eq1.7} &\mbox{if $\{u_n\}\subset D_e(\EE)$ and $\sup_{n\ge1}\EE(u_n,u_n)<\infty$,}\nonumber \\
&\mbox{\quad then, up to a subsequence, $\{u_n\}$ converges $m$-a.e.}
\end{align}
%In examples known to us, assumption (\ref{eq1.7}) is stronger than (\ref{eq1.8}). Nevertheless, it is satisfied in many interesting situations. For instance, it is satisfied in case of forms associated with operators from problems (\ref{eq1.3}) and (\ref{eq1.4}). Other examples of local and nonlocal forms  satisfying (\ref{eq1.7}) are given in Section \ref{sec3}.
We also show that if $u$ is a solution to (\ref{eq1.1}), then
$T_k(u)=((-k)\vee u)\wedge k\in  D_e(\EE)$  for every $k>0$ and
\[
\EE(T_k(u),T_k(u))\le 2k\|\mu\|_{TV},
\]
where $\|\mu\|_{TV}$ is the total variation norm of $\mu$. Furthermore, if (\ref{eq1.6}) is satisfied, then the solution $u$ is unique.

Condition (\ref{eq1.7}) holds true  in many interesting situations.
For instance, it holds if
\begin{equation}
\label{eq1.10} \mbox{the embedding $V_1\hookrightarrow L^2(E;m)$ is compact},
\end{equation}
where  $V_1$ denotes the space $D(\EE)$ equipped with the norm determined by the form $\EE_1(\cdot,\cdot):=\EE(\cdot,\cdot)+(\cdot,\cdot)$.
%This is a consequence of the fact that (\ref{eq1.10}) implies the following %Poincar\'e-type inequality: there exists $c>0$ such that
%\begin{equation}
%\label{eq2.2} \|u\|_{L^2(E;m)}\le c\EE(u,u)^{1/2},\quad u\in D(\EE)
%\end{equation}
%(see \cite{JS}). Therefore, under (\ref{eq1.10}), $D_e(\EE)=D(\EE)$ and the %norms determined by $\EE$ and $\EE_1$ are equivalent.
Another condition, which is often satisfied in practice and implies (\ref{eq1.7}), is the so called absolute continuity condition saying that
\begin{equation}
\label{eq1.8}
R_{\alpha}(x,\cdot)\ll m\quad \mbox{for any $\alpha>0$ and $x\in E$},
\end{equation}
where $R_{\alpha}(x,\cdot)$ is the resolvent kernel associated with $\EE$.
For symmetric forms considered in this paper,  condition (\ref{eq1.8}) is equivalent to the condition
\begin{equation}
\label{eq1.9}
P_t(x,\cdot)\ll m\quad \mbox{for any $t>0$ and $x\in E$},
\end{equation}
where $P_t(x,\cdot)$  is the transition kernel associated with $\EE$.
%That (\ref{eq1.8}) implies (\ref{eq1.7}) follows from the  results of  %\cite{K:PA} (see  Section \ref{sec3}).

The main idea of our proofs resembles the idea used in  case of problem (\ref{eq1.3}) (see the proof of Theorem B.4 in Brezis, Marcus and Ponce  \cite{BMP} and also Ponce \cite[Chapter 19]{Po}). Let $V$ denote the extended space $D_e(\EE)$ equipped with the norm determined by $\EE$. We first prove the existence of a solution to (\ref{eq1.1}) with  $\mu\in\MM_{0,b}\cap V'$, where $\MM_{0,b}$ is the set of all diffuse measures on $E$ and $V'$ is the dual of $V$. This step can be viewed as  some modification of the result of Brezis and Browder \cite{BB} on absorption problems (\ref{eq1.3}) with $\mu\in H^{-1}(D)$. To get the existence for  general $\mu\in\MM_{0,b}$, we approximate it by a suitably chosen sequence $\{\mu_n\}\subset \MM_{0,b}\cap V'$ and show that solutions $u_n$ of (\ref{eq1.1}) corresponding to the measures $\mu_n$ converge to a solution of (\ref{eq1.1}). In this second step we use some a priori estimates for $u_n$ in $V$ and condition (\ref{eq1.7}).  In \cite{KR:BPAN} it is proved that
any $\mu\in\MM_{0,b}$ admits decomposition of the form $\mu=g+\nu$ with $g\in L^1(E;m)$ and $\nu\in\MM_{0,b}\cap V'$ (this  generalizes  the corresponding result proved  by Boccardo, Gallou\"{e}t and Orsina \cite{BGO}  for the form associated with $\Delta$). Therefore, similarly to \cite{BMP},  in the second step of the proof it is enough  to approximate  by $\{\mu_n\}$ the measure $\mu=g\cdot m$. This, however, does not simplify the reasoning, so in the  present paper we give a  direct approximation of $\mu\in\MM_{0,b}$ (without recourse to \cite{KR:BPAN}).

In the present paper we confine ourselves to single equation with
operator corresponding to symmetric regular Dirichlet forms. For
results (proved with the help of probabilistic methods) for
quasi-regular, possibly nonsymmetric forms, we refer the reader to
\cite{KR:CM}, and for results for systems of equations to
\cite{K:PA}. Also note that equations with $f=0$ but $\Delta$ replaced by the Schr\"odinger operator are treated in \cite{OP2} and \cite[Chapter 22]{Po}.

In the paper we deal exclusively with equations
with diffuse measures. The theory of semilinear equations with general
bounded measures is much more subtle. In this case (\ref{eq1.3}) with
$f$ satisfying (\ref{eq1.6}) need not have a solution
(see \cite{BP,BB1,BMP}).
%In fact, in that case, if  for every  continuous nonincreasing
%function $f:D\rightarrow\BR$  decreasing function
Results on (\ref{eq1.3}) with general bounded measure $\mu$ and $f$ satisfying the monotonicity condition  are found in \cite{BB1,BMP,DPP}, and for equations
with $f$ satisfying the sign condition (\ref{eq1.2}) in
\cite[Chapter 21]{Po}. The Dirichlet problem for linear equations with nonlocal
operators and bounded measure $\mu$ is studied in
\cite{KPU,KMS,Pe}. In Klimsiak \cite{K:CVPDE} general
equations of the form (\ref{eq1.1}) with general bounded measure
$\mu$ and $f$ satisfying  (\ref{eq1.6}) are considered. The
question whether one can extend the existence results of
{\cite{Po} to some nonlocal operators or extend some existence
results  of \cite{K:CVPDE} to $f$ satisfying (\ref{eq1.2}) remains
open.

\section{Preliminaries}

In this paper,  $E$ is a locally compact separable metric space and $m$ is a Radon measure such that $\mbox{supp}[m]=E$, i.e. $m$ is a nonnegative measure on the $\sigma$-field of Borel subsets of $E$ which is finite on compact sets and strictly positive on nonempty open sets.

In what follows $(\EE,D(\EE))$ is  a symmetric regular Dirichlet form on $L^2(E;m)$.
We denote by $(\cdot,\cdot)$ the usual inner product in $L^2(E;m)$. As usual, for $\lambda\ge0$  we  set $\EE_\lambda(u,v)=\EE(u,v)+\lambda(u,v)$,
$u,v\in D(\EE)$.

In  the whole paper we assume that $(\EE,D(\EE))$ is
transient. Recall that this means that  there exists a bounded strictly $m$-a.e. positive $g\in L^1(E;m)$ such that
\begin{equation}
\label{eq2.5}
\int_E|u(x)|g(x)\,m(dx)\le\EE(u,u),\quad u\in D(\EE).
\end{equation}
For an equivalent formulation, see \cite[Section 1.5]{FOT}. The extended Dirichlet space associated with
$(\EE,D(\EE))$ (see \cite[Section 1.5]{FOT} for the definition)
will be denoted by $(\EE,D_e(\EE))$. Note that $D_e(\EE)$ with the
inner product $\EE$ is a Hilbert space (see \cite[Theorem
1.5.3]{FOT}). In the sequel this space will be denoted by $V$. We denote by
$V'$ the dual space of $V$.
The duality pairing  between $V'$ and $V$ will be denoted by $\langle\langle\cdot,\cdot\rangle\rangle$.

In the paper we define $0$-order quasi notions with respect to $\EE$ (capacity $\mbox{Cap}_{(0)}$, exceptional sets, nests, quasi-continuity) as in \cite[Chapter 2, page 74]{FOT}.  Recall that $\mbox{Cap}_{(0)}$ is defined as follows.  For an open subset $U$ of $E$, we
set
\[
\LL^{(0)}_U=\{u\in D_e(\EE):u\ge1\mbox{ $m$-a.e. on }U\}
\]
and
\[
\mbox{Cap}_{(0)}(U)=\left\{
\begin{array}{ll}
\inf\{\EE(u,u):u\in\LL^{(0)}_U\} &\mbox{if } \LL^{(0)}_U\neq\emptyset,\\
\infty & \mbox{if }\LL^{(0)}_U=\emptyset.
\end{array}
\right.
\]
Then, as usual, for an arbitrary  $A\subset E$, we set
\[
\mbox{Cap}_{(0)}(A)=\inf\{\mbox{Cap}_{(0)}(U):U\mbox{ open, }U\supset
A\}.
\]
We say that $A\subset E$ is exceptional if $\mbox{Cap}_{(0)}(A)=0$, and we
say that a property of points in $E$ holds quasi-everywhere (q.e. in abbreviation) if it holds outside some exceptional subset of $E$.

For a measure $\mu$ on $E$ and a function $u:E\rightarrow\BR$, we
use the notation
\[
\langle\mu,u\rangle=\int_Eu(x)\,\mu(dx),
\]
whenever the integral is well defined.
For a signed Borel measure $\mu$, we denote by $\mu^{+}$ and
$\mu^{-}$ its positive and negative parts, and by $|\mu|$ the total variation measure, i.e. $|\mu|=\mu^{+}+\mu^{-}$. We denote by
$\MM_b$ the space of all finite signed Borel measures on $E$
endowed with the total variation norm $ \|\mu\|_{TV}=|\mu|(E)$,
and by  $\MM_{0,b}$  the subspace of $\MM_b$ consisting
of all measures charging  no set of capacity $\mbox{Cap}_{(0)}$ zero. Elements of $\MM_{0,b}$ are called diffuse measures.

We write  $\mu\in\MM_{0,b}\cap V'$ if for some $c>0$,
\[
\langle|\mu|,|\tilde u|\rangle\le c\EE(u,u)^{1/2},\quad u\in
D_e(\EE),
\]
where $\tilde u$ denotes a quasi-continuous $m$-version of $u$
(see \cite[Theorem 2.1.7]{FOT}). Elements of $\MM_{0,b}\cap V'$
are called measures of finite 0-order energy integral. If
$\mu\in\MM_{0,b}\cap V'$, then
\begin{equation}
\label{eq2.16xy}
\langle\mu,\tilde u\rangle=\langle\langle \mu,u\rangle\rangle,
\quad u\in D_e(\EE).
\end{equation}
If $f\in V'$, then by  Riesz's theorem there is a unique element $Gf\in D_e(\EE)$
%(called the potential of $\mu$)
such that
\begin{equation}
\label{eq2.16}
\EE(Gf,u)=\langle\langle f,u\rangle\rangle,\quad u\in D_e(\EE).
\end{equation}
In particular, if $\mu\in \MM_{0,b}\cap V'$, then the function $G\mu$ is well defined and belongs to $V$.

Let $\BB^+(E)$ (resp. $\BB_b(E)$) denote the set of all
positive (resp. bounded) real Borel functions on $E$, and let
$(G_\alpha)_{\alpha>0}$ denote the strongly continuous resolvent on $L^2(E;m)$ associated with $(\EE,D(\EE))$. Recall that $\alpha G_{\alpha}$ is Markovian for each $\alpha>0$, i.e. $0\le\alpha G_{\alpha}f\le 1$ $m$-a.e. whenever $f\in L^2(E;m)$ and $0\le f\le1$ $m$-a.e. Since $G_{\alpha}$ is positivity preserving, we can extend it to any positive $f\in\BB^+(E)$ by
\begin{equation}
\label{eq2.17}
G_\alpha f(x)= \lim_{n\rightarrow \infty}G_\alpha f_n(x)=\sup_{n\ge 1}G_\alpha f(x)\quad \mbox{for }m\mbox{-a.e. }x\in E,
\end{equation}
where $\{f_n\}\subset L^2(E;m)$ is a nondecreasing sequence of
positive  functions converging $m$-a.e. to $f$. It is clear that
$G_\alpha f$ does not depend on the choice of the sequence
$\{f_n\}$. By the resolvent equation, if $\beta>\alpha>0$, then
$G_{\alpha}f\le G_{\beta}f$ $m$-a.e. for any $\BB^+(E)$. Therefore
for  $f\in\BB^+(E)$ we can set
\begin{equation}
\label{eq2.18}
Gf(x):=G_0f(x)=\lim_{\alpha\downarrow0}G_\alpha f(x)=\sup_{\alpha>0} G_\alpha f(x)\quad \mbox{for }m\mbox{-a.e. }x\in E.
\end{equation}
By \cite[Lemma 2.2.11]{FOT}, for $f\in\BB^+(E)$ such that $f\cdot
m\in V'$, $Gf$ defined by (\ref{eq2.18}) coincides with $Gf$ of
(\ref{eq2.16}).
%By (\ref{eq2.17}) and (\ref{eq2.18}) the resolvent equation extends to
%\[
%Gf=G_{\alpha}f+\alpha
%GG_{\alpha}f=G_{\alpha}f+\alpha G_{\alpha}Gf\quad m\mbox{-a.e.}
%\]
%for $f\in\BB^+(E)$.
%In the same manner one can extend $T_t$ to $\BB^+(E)$.
%Note that if for $f\in\BB^+(E)$ we put
%$\int_0^Ne^{-\alpha t}T_tf\,dt:=\lim_{n\rightarrow}\int_0^Ne^{-\alpha t}T_t f_n\,dt$, where $\{f_n\}\subset %L^2(E;m)$ is a nondecreasing sequence of positive functions converging $m$-a.e. to $f$, and then we put %$\int_0^\infty e^{-\alpha t}T_tf\,dt:=\lim_{N\rightarrow \infty} \int_0^N e^{-\alpha t}T_tf\,dt$, then for %each $\alpha>0$ and all $f\in\BB^+(E)$ we get
%\[
%G_\alpha f=\int_0^\infty e^{-\alpha t} T_t f\,dt\quad m\mbox{-a.e.}
%\]

An increasing sequence $\{F_n\}$ of closed subsets of $E$ is called a generalized nest if $\mbox{Cap}_{(0)}(K\setminus F_n)\rightarrow 0$ for any compact $K\subset E$. A Borel measure $\mu$ on $E$
is called smooth if there exists a generalized nest $\{F_n\}$ such that  $\mathbf{1}_{F_n}\cdot\mu\in \MM_{0,b},\, n\ge 1$. In particular each diffuse  measure is smooth. If $\{F_n\}$ is a generalized  nest such that  $\mathbf{1}_{F_n}\cdot\mu\in \MM_{0,b}$ then
\begin{equation}
\label{eq2.8}
\mu\Big(E\setminus\bigcup^{\infty}_{n=1}F_n\Big)=0
\end{equation}
(see \cite[(2.2.18)]{FOT}).

By  the 0-order version of \cite[Theorem 2.2.4]{FOT} (see the remark following \cite[Corollary
2.2.2]{FOT}),  for each smooth measure $\mu$
there exists a generalized nest $\{F_n\}$ such that $\mathbf{1}_{F_n}\cdot\mu\in\MM_{0,b}\cap V'$.
Therefore, for a positive  smooth measure $\mu$, we may define  a function $G\mu$ with values in $[0,\infty]$ by
\begin{equation}
\label{eq2.14}
G\mu(x)=\lim_{n\rightarrow\infty} G\mu_n(x)=\sup_{n\ge 1} G\mu_n(x),\quad x\in E,
\end{equation}
where $\mu_n=\mathbf{1}_{F_n}\cdot\mu$ and $\{F_n\}$ is a generalized nest such that $\mu_n\in\MM_{0,b}\cap V'$, $n\ge1$. Note that $G\mu$ is defined uniquely up to $m$-equivalence.

In the paper, for a function $u$ on $E$, we denote by $\tilde
u$ its quasi-continuous $m$-version (whenever it exists). We will
freely  use, without explicit mention,  the following fact: if
$u_1\le u_2$ $m$-a.e. and $u_1, u_2$ have quasi-continuous
$m$-versions, then $\tilde u_1\le \tilde u_2$ q.e. (see
\cite[Lemma 2.1.4]{FOT}).
%,  and if $\{\eta_k\}$ is a sequence of positive functions such that
%$\eta_k$ and $\eta:=\sum_{k\ge 1} \eta_k$ have  quasi-continuous %$m$-versions $\tilde \eta_k$ and $\tilde \eta$, then $\sum_{k\ge1}
%\tilde\eta_k=\tilde \eta$  q.e. The last fact follows easily from %\cite[Theorem 2.1.2(i)]{FOT}.
%Recall that a measurable positive function $u$ on $E$ is called $\alpha$-excessive if for every $t>0$,
%\[
%e^{-\alpha t}T_tu\le u.
%\]
%If a function $u$ is $0$-excessive we call it excessive.
%By (\cite{FOT}) for every positive $\mu\in \MM_{0,b}\cap V'$ function $G\mu$ is  excessive.
%Hence and (\ref{eq2.14}) we get easily that $G\mu$ is excessive for every positive smooth measure $\mu$.

Let $\beta>0$. In the proof of the lemma below we will need the symmetric form $\EE^{(\beta)}$ defined by
\[
\EE^{(\beta)}(u,v)=\beta(u,v-\beta G_{\beta}v),\quad u,v\in L^2(E;m).
\]
Since $\beta G_{\beta}$ is a symmetric linear operator on $L^2(E;m)$, by \cite[Lemma 1.4.1]{FOT} there exists a unique nonnegative symmetric Radon measure $\sigma$
on the product space $E\times E$ such that for any Borel functions $u,v\in L^1(E;m)$,
\begin{equation}
\label{eq2.9}
(u,\beta G_{\beta})=\int_{E\times E}u(x)v(x)\,\sigma_{\beta}(dx\,dy).
\end{equation}
Since $\beta G_{\beta}$ is Markovian, from (\ref{eq2.9}) it follows that $\sigma_{\beta}(E\times B)\le m(B)$ for any Borel  $B\subset E$. Let $s_{\beta}$ denote the Radon-Nikodym derivative of the measure $B\mapsto\sigma_{\beta}(E\times B)$ with respect to $m$. Then $0\le s_{\beta}\le1$ $m$-a.e., and by a direct computation one can check that for a Borel $u\in L^2(E;m)$ one can rewrite $\EE^{(\beta)}(u,u)$ in the form
\begin{align}
\label{eq2.21} \EE^{(\beta)}(u,u)&=\frac{\beta}{2}\int_{E\times E}
(u(x)-u(y))^2\sigma_{\beta}(dx\,dy)\nonumber\\
&\quad+\beta\int_E(u(x))^2(1-s_{\beta}(x))\,m(dx)
\end{align}
(see \cite[(1.4.8)]{FOT}). The expression (\ref{eq2.21}) can be extended to
any Borel function
$u$ on $E$. Furthermore,  by \cite[Theorem 1.5.2(ii)]{FOT}, for
any Borel $u\in D_e(\EE)$, $\EE^{(\beta)}(u,u)$ increases to
$\EE(u,u)$ as $\beta\rightarrow\infty$.

\begin{lemma}
\label{lem2.2}
Let $u\in D_e(\EE)$, and let $\psi:\BR\rightarrow\BR$ be an increasing function such that $\psi(0)=0$ and $\psi$ is Lipschitz continuous with Lipschitz constant 1. Then
\[
\EE(\psi(u),\psi(u))\le \EE(u,\psi(u)).
\]
\end{lemma}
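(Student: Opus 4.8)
The plan is to prove the inequality first for each of the approximating forms $\EE^{(\beta)}$ and then to let $\beta\to\infty$. Since $\psi(0)=0$ and $\psi$ is Lipschitz with constant one, $\psi$ is a normal contraction, so $\psi(u)\in D_e(\EE)$ (in particular $\EE(\psi(u),\psi(u))$ is finite); moreover $u\pm\psi(u)\in D_e(\EE)$, because $D_e(\EE)$ is a linear space. Fixing a Borel $m$-version of $u$ (so that $\psi(u)$ is Borel as well, $\psi$ being continuous), I would work with the polarized form of the representation \eqref{eq2.21},
\[
\EE^{(\beta)}(w,v)=\frac{\beta}{2}\int_{E\times E}(w(x)-w(y))(v(x)-v(y))\,\sigma_\beta(dx\,dy)+\beta\int_E w(x)v(x)(1-s_\beta(x))\,m(dx),
\]
which is valid for Borel functions and in which $\sigma_\beta\ge0$ and $0\le s_\beta\le1$.

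The heart of the argument is a pair of pointwise inequalities. First, because $\psi$ is increasing, the differences $a-b$ and $\psi(a)-\psi(b)$ always have the same sign, whence, using $|\psi(a)-\psi(b)|\le|a-b|$,
\[
(\psi(a)-\psi(b))^2\le|a-b|\,|\psi(a)-\psi(b)|=(a-b)(\psi(a)-\psi(b)),\quad a,b\in\BR.
\]
Second, from $\psi(0)=0$, monotonicity and the contraction property one checks that $0\le\psi(a)\le a$ when $a\ge0$ and $a\le\psi(a)\le0$ when $a<0$; in either case $\psi(a)(a-\psi(a))\ge0$, that is,
\[
(\psi(a))^2\le a\,\psi(a),\quad a\in\BR.
\]
Applying the first inequality with $a=u(x)$, $b=u(y)$ inside the $\sigma_\beta$-integral and the second with $a=u(x)$ inside the $m$-integral (the integrands being weighted by the nonnegative quantities coming from $\sigma_\beta$ and $1-s_\beta$), I obtain termwise
\[
\EE^{(\beta)}(\psi(u),\psi(u))\le\EE^{(\beta)}(u,\psi(u)).
\]

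It then remains to pass to the limit as $\beta\to\infty$. On the left-hand side, since $\psi(u)$ is a Borel function in $D_e(\EE)$, the quantity $\EE^{(\beta)}(\psi(u),\psi(u))$ increases to $\EE(\psi(u),\psi(u))$ by \cite[Theorem 1.5.2(ii)]{FOT}. The right-hand side is a mixed bilinear expression, so monotone convergence does not apply directly; here I would use polarization,
\[
\EE^{(\beta)}(u,\psi(u))=\tfrac14\big(\EE^{(\beta)}(u+\psi(u),u+\psi(u))-\EE^{(\beta)}(u-\psi(u),u-\psi(u))\big),
\]
together with the fact that both $u+\psi(u)$ and $u-\psi(u)$ lie in $D_e(\EE)$, so each of the two quadratic forms converges to its $\EE$-counterpart, giving $\EE^{(\beta)}(u,\psi(u))\to\EE(u,\psi(u))$. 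Letting $\beta\to\infty$ in the termwise inequality then yields $\EE(\psi(u),\psi(u))\le\EE(u,\psi(u))$. The main technical point is precisely this last passage to the limit: the quadratic functionals converge monotonically, but the cross term must be treated through polarization, which is why one needs $u\pm\psi(u)\in D_e(\EE)$ rather than merely $u,\psi(u)\in D_e(\EE)$.
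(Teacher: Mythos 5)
Your proposal is correct and follows essentially the same route as the paper: both establish the inequality termwise for the approximating forms $\EE^{(\beta)}$ via the integral representation with $\sigma_\beta$ and $1-s_\beta$, and then pass to the limit using \cite[Theorem 1.5.2(ii)]{FOT}, with the cross term handled by polarization through $u\pm\psi(u)\in D_e(\EE)$. You merely spell out the two pointwise inequalities and the limit argument in more detail than the paper does.
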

\begin{proof}
By \cite[Theorem 1.5.3$(\delta)$]{FOT}, $\psi(u)\in D_e(\EE)$. Furthermore, for any $\beta>0$ we have
\begin{align*}
\EE^{(\beta)}(u,\psi(u))&=\frac14\{\EE^{(\beta)}(u+\psi(u),u+\psi(u))
-\EE^{(\beta)}(u-\psi(u),u-\psi(u))\}\\
&=\frac{\beta}{2}\int_{E\times E}(u(x)-u(y))(\psi(u(x))-\psi(u(y)))
\sigma_{\beta}(dx\,dy)\\
&\quad+\beta\int_Eu(x)\psi(u(x))(1-s_{\beta}(x))\,m(dx)
\end{align*}
and
\begin{align*}
\EE^{(\beta)}(\psi(u),\psi(u))&=\frac{\beta}{2}
\int_{E\times E} (\psi(u(x))-\psi(u(y)))^2\sigma_{\beta}(dx\,dy) \\
&\quad+\beta\int_E(\psi(u(x)))^2(1-s_{\beta}(x))\,m(dx).
\end{align*}
From this we conclude that for any $\beta>0$,
\[
\EE^{(\beta)}(\psi(u),\psi(u))\le \EE^{(\beta)}(u,\psi(u)).
\]
Letting $\beta\rightarrow\infty$ and using \cite[Theorem
1.5.2(ii)]{FOT} we obtain the desired inequality.
\end{proof}

For $k\ge0$ and $u:E\rightarrow\BR$, we write
\[
T_k(u)(x)=((-k)\vee u(x))\wedge k,\quad x\in E.
\]
Since $\psi(y)=((-k)\vee y)\wedge y$, $y\in\BR$, satisfies the assumptions of Lemma \ref{lem2.2},
if $u\in D_e(\EE)$, then $T_ku\in D_e(\EE)$ and for every $k\ge0$,
\begin{equation}
\label{eq2.22}
\EE(T_k(u),T_k(u))\le \EE(u,T_k(u)).
\end{equation}

\begin{lemma}
\label{lem2.3}
Let $\mu\in\MM_{0,b}^+$. Then $G\mu$ has a quasi-continuous $m$-version.
\end{lemma}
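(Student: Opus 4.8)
The plan is to realize $G\mu$ as an increasing q.e.\ limit of quasi-continuous potentials of finite $0$-order energy and to control the tails of this limit by a capacity estimate that follows directly from Lemma~\ref{lem2.2}. First I would reduce to finite-energy pieces. Since $\mu\in\MM_{0,b}^+$ is diffuse it is smooth, so there is a generalized nest $\{F_n\}$ with $\mu_n:=\mathbf{1}_{F_n}\cdot\mu\in\MM_{0,b}^+\cap V'$ and $G\mu=\sup_n G\mu_n$ in the sense of \eqref{eq2.14}. Each $\mu_n\in V'$, so $G\mu_n\in D_e(\EE)=V$ by \eqref{eq2.16} and hence admits a quasi-continuous $m$-version $\widetilde{G\mu_n}$. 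Because $G\mu_n\le G\mu_{n+1}$ $m$-a.e., these versions increase q.e., and I set $v:=\sup_n\widetilde{G\mu_n}$, a $[0,\infty]$-valued function defined q.e.; it is an $m$-version of $G\mu$, and the task becomes to show that $v$ is finite q.e.\ and quasi-continuous.

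The key step is the capacity bound: for $\nu\in\MM_{0,b}^+\cap V'$ and $\lambda>0$ one has $\mbox{Cap}_{(0)}(\{\widetilde{G\nu}>\lambda\})\le\lambda^{-1}\nu(E)$. Since $\nu\ge0$ forces $G\nu\ge0$, the function $w:=\lambda^{-1}T_\lambda(G\nu)$ lies in $D_e(\EE)$ and satisfies $\tilde w=1$ q.e.\ on $\{\widetilde{G\nu}>\lambda\}$, so the variational description of $\mbox{Cap}_{(0)}$ gives $\mbox{Cap}_{(0)}(\{\widetilde{G\nu}>\lambda\})\le\EE(w,w)$. Now Lemma~\ref{lem2.2}, in the form \eqref{eq2.22}, yields $\EE(T_\lambda(G\nu),T_\lambda(G\nu))\le\EE(G\nu,T_\lambda(G\nu))$, and \eqref{eq2.16} together with \eqref{eq2.16xy} rewrites the right-hand side as $\langle\nu,T_\lambda(\widetilde{G\nu})\rangle\le\lambda\,\nu(E)$, because $0\le T_\lambda(\widetilde{G\nu})\le\lambda$ and $\nu\ge0$. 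Hence $\EE(w,w)=\lambda^{-2}\EE(T_\lambda(G\nu),T_\lambda(G\nu))\le\lambda^{-1}\nu(E)$, which proves the bound.

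Applying the bound to $\nu=\mu_n$ shows $\mbox{Cap}_{(0)}(\{v>\lambda\})\le\lambda^{-1}\mu(E)$ via continuity of $\mbox{Cap}_{(0)}$ along increasing unions, whence $v<\infty$ q.e. Applying it to $\nu=\sigma_{n,p}:=\mu_{n+p}-\mu_n\ge0$, and using $\widetilde{G\sigma_{n,p}}=\widetilde{G\mu_{n+p}}-\widetilde{G\mu_n}$ q.e.\ together with the same continuity, gives $\mbox{Cap}_{(0)}(\{v-\widetilde{G\mu_n}>\lambda\})\le\lambda^{-1}(\mu(E)-\mu(F_n))$, where $\mu(E)-\mu(F_n)\to0$ by \eqref{eq2.8}. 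Thus $\widetilde{G\mu_n}\to v$ quasi-uniformly; selecting a subsequence by a Borel--Cantelli argument, $\widetilde{G\mu_{n_k}}\to v$ uniformly off open sets of arbitrarily small capacity, so $v$ is a uniform limit of quasi-continuous functions on a nest and is therefore quasi-continuous.

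I expect the main obstacle to be the capacity estimate, and within it the passage from the definition of $\mbox{Cap}_{(0)}$ for open sets to the inequality $\mbox{Cap}_{(0)}(U)\le\EE(w,w)$ for the merely quasi-open level set $U=\{\widetilde{G\nu}>\lambda\}$; this is handled by the standard variational characterization of $0$-order capacity through quasi-continuous functions in \cite{FOT}. The concluding implication that a quasi-uniform limit of quasi-continuous functions is quasi-continuous is likewise standard and I would cite it rather than reprove it.
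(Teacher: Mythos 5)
Your proposal is correct and follows essentially the same route as the paper: decompose $\mu$ along a generalized nest into finite-energy pieces, derive the Chebyshev-type capacity bound $\mbox{\rm Cap}_{(0)}(\widetilde{G\nu}>\lambda)\le\lambda^{-1}\nu(E)$ for positive $\nu\in\MM_{0,b}\cap V'$ from Lemma~\ref{lem2.2} via (\ref{eq2.22}) and the duality identity, apply it to differences of the $\mu_n$, and conclude quasi-continuity of the limit by a Borel--Cantelli argument. The only cosmetic differences are that the paper fixes the rate $\mu(E\setminus H_n)\le 2^{-3n}$ in advance and estimates consecutive differences $\tilde u_{n+1}-\tilde u_n$ (using $T_{\varepsilon+\delta}$ and letting $\delta\searrow0$ to handle the strict inequality defining the level set, a small technical point you should also observe), whereas you estimate the tails $v-\widetilde{G\mu_n}$ and extract a subsequence afterwards.
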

\begin{proof}
Let $\{H_n\}$ be a generalized nest  such that $\mu_n=\mathbf{1}_{H_n}\mu\in\MM_{0,b}\cap V'$, $n\ge 1$,
and $\mu(E\setminus H_n)\le 2^{-3n}$.
Set $u_n=G\mu_n,\, u=G\mu$. It is clear that $u_n\nearrow u$ $m$-a.e. By the $0$-order version of \cite[(2.1.10)]{FOT}, for all  $\varepsilon,\delta>0$ we have
\begin{align*}
\mbox{Cap}_{(0)}(\tilde u_{n+1}-\tilde u_n>\varepsilon)
&=\mbox{Cap}_{(0)}(T_{\varepsilon+\delta}(\tilde u_{n+1}-\tilde u_n)>\varepsilon)\\
&\le \varepsilon^{-2}\EE(T_{\varepsilon+\delta}(u_{n+1}-u_n),T_{\varepsilon+\delta}(u_{n+1}-u_n))
\end{align*}
Since $\tilde u_{n+1}-\tilde u_n\in D_e(\EE)$, it follows from the above inequality and (\ref{eq2.22}) that
\begin{align*}
\nonumber \mbox{Cap}_{(0)}(\tilde u_{n+1}-\tilde u_n>\varepsilon)&
\le \varepsilon^{-2}\EE(u_{n+1}-u_n,T_{\varepsilon+\delta}(u_{n+1}-u_n))\nonumber\\
&=\varepsilon^{-2}\int_E T_{\varepsilon+\delta}(\tilde u_{n+1}-\tilde u_n)\,d(\mu_{n+1}-\mu_n)
\nonumber\\
&\le(\varepsilon+\delta)\varepsilon^{-2}\mu(E\setminus H_n).
\end{align*}
Taking $\varepsilon=2^{-n}$ and letting $\delta\searrow 0$ we get
\begin{equation}
\label{eq2.11}
\mbox{Cap}_{(0)}(\tilde u_{n+1}-\tilde u_n>2^{-n})\le 2^{-2n},\quad n\ge 1.
\end{equation}
By \cite[Theorem 2.1.2]{FOT}, there exists a  nest $\{G_k\}$ such that   $\tilde u_n$ is continuous on $G_k$
for all $k,n\ge 1$. Let $F_n=\bigcap_{k=n}^{\infty}(E\setminus U_k)\cap G_k$, where $U_k=\{\tilde u_{k+1}-\tilde u_k>2^{-k}\}$. From (\ref{eq2.11}) it is clear that $\{F_n\}$
is a nest and $\tilde u$ defined q.e. as $\tilde u= \lim_{n\rightarrow \infty}\tilde{u}_n$ is quasi-continuous. Of course, $\tilde u$ is an $m$-version of $u$.
\end{proof}

\begin{lemma}
\label{lem2.0} Let $\mu\in\MM_{0,b}$. Then for every $k\ge0$,
$T_k(G\mu)\in D_e(\EE)$ and
\begin{equation}
\label{eq2.20}
\EE(T_k(G\mu),T_k(G\mu))\le k\|\mu\|_{TV}.
\end{equation}
\end{lemma}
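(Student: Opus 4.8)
The plan is to approximate $\mu$ by measures of finite $0$-order energy integral, derive a uniform energy bound for the truncations from (\ref{eq2.22}), and then pass to the limit using lower semicontinuity of $\EE$ along $m$-a.e. convergent sequences. First I would fix the approximation. Since $|\mu|=\mu^++\mu^-$ is a positive diffuse, hence smooth, measure, the $0$-order version of \cite[Theorem 2.2.4]{FOT} yields a generalized nest $\{F_n\}$ with $\mathbf{1}_{F_n}\cdot|\mu|\in\MM_{0,b}\cap V'$; shrinking the nest I may also arrange $|\mu|(E\setminus F_n)\to0$. Put $\mu_n=\mathbf{1}_{F_n}\cdot\mu$ and $\mu_n^\pm=\mathbf{1}_{F_n}\cdot\mu^\pm$. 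As $0\le\mu_n^\pm\le\mathbf{1}_{F_n}\cdot|\mu|$, all of these lie in $\MM_{0,b}\cap V'$, while $\mu_n^\pm\nearrow\mu^\pm$ by (\ref{eq2.8}) and $\|\mu_n\|_{TV}=|\mu|(F_n)\le\|\mu\|_{TV}$.

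Next I would make sense of $G\mu$ for a signed $\mu$ and establish $m$-a.e.\ convergence. Inspecting the proof of Lemma \ref{lem2.3}, the limit $G\mu^\pm$ is finite and continuous on each set of the constructed nest, hence finite q.e.\ and in particular finite $m$-a.e.; therefore $G\mu:=G\mu^+-G\mu^-$ is well defined and finite $m$-a.e. Writing $v_n:=G\mu_n=G\mu_n^+-G\mu_n^-\in V$, the monotone approximation of potentials (\ref{eq2.14}) gives $G\mu_n^\pm\nearrow G\mu^\pm$ $m$-a.e., so $v_n\to G\mu$, and consequently $T_k(v_n)\to T_k(G\mu)$ $m$-a.e.

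For the uniform bound, since $v_n=G\mu_n$ with $\mu_n\in\MM_{0,b}\cap V'$, I would combine (\ref{eq2.22}) with (\ref{eq2.16}) and (\ref{eq2.16xy}):
\[
\EE(T_k v_n,T_k v_n)\le\EE(v_n,T_k v_n)=\langle\langle\mu_n,T_k v_n\rangle\rangle=\int_E T_k(\tilde v_n)\,d\mu_n\le k\,|\mu_n|(E)\le k\|\mu\|_{TV},
\]
where the second equality is (\ref{eq2.16xy}) applied to $T_k v_n\in D_e(\EE)$, the third uses that $\mu_n$ is diffuse (so the pairing sees the quasi-continuous version $T_k(\tilde v_n)=\widetilde{T_k v_n}$), and the last inequality uses $|T_k(\tilde v_n)|\le k$. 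Thus $\{T_k v_n\}$ is bounded in $\EE$-energy uniformly in $n$.

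Finally I would pass to the limit, which I expect to be the main obstacle. The sequence $\{T_k v_n\}$ is bounded in the Hilbert space $V=D_e(\EE)$ and converges $m$-a.e.\ to the finite function $T_k(G\mu)$, and I want $T_k(G\mu)\in D_e(\EE)$ with $\EE(T_k(G\mu),T_k(G\mu))\le\liminf_n\EE(T_k v_n,T_k v_n)\le k\|\mu\|_{TV}$. Since this must hold without invoking (\ref{eq1.7}), I would argue directly: extract a weakly convergent subsequence in $V$, apply the Banach--Saks/Mazur lemma to get convex combinations converging strongly in $V$, hence in $L^1(E;g\cdot m)$ by transience (\ref{eq2.5}) (which extends from $D(\EE)$ to $D_e(\EE)$), and therefore $m$-a.e.\ along a further subsequence; comparing with the $m$-a.e.\ limit identifies the weak $V$-limit as $T_k(G\mu)$, and weak lower semicontinuity of $\EE$ then gives the estimate. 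The two delicate points are precisely this identification of the weak limit with the pointwise limit, and the easily overlooked prerequisite that $G\mu$ is meaningful for a signed $\mu$ at all, which rests on the q.e.-finiteness of $G\mu^\pm$ extracted from Lemma \ref{lem2.3}.
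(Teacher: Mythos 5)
Your proposal is correct and follows essentially the same route as the paper: the same generalized-nest approximation $\mu_n=\mathbf{1}_{F_n}\cdot\mu$, the same energy bound $\EE(T_k(G\mu_n),T_k(G\mu_n))\le\EE(G\mu_n,T_k(G\mu_n))=\int_E\widetilde{T_k(G\mu_n)}\,d\mu_n\le k\|\mu\|_{TV}$ via (\ref{eq2.22}), (\ref{eq2.16}) and (\ref{eq2.16xy}), and the same passage to the limit by weak compactness in $V$, Banach--Saks, convergence in $L^1(E;g\cdot m)$ from transience (\ref{eq2.5}), identification of the weak limit with the $m$-a.e.\ limit $T_k(G\mu)$, and weak lower semicontinuity of the energy. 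The two ``delicate points'' you flag (q.e.-finiteness of $G\mu^{\pm}$ from Lemma \ref{lem2.3}, and the limit identification) are exactly the points the paper's proof handles, in the same way.
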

\begin{proof}
By Lemma \ref{lem2.3}, $G\mu^+, G\mu^-$ are finite $m$-a.e., so $G\mu$ is well defined $m$-a.e.  Let $\{F_n\}$ be a generalized nest for $\mu$ such that $\mu_n=\mathbf{1}_{F_n}\mu\in V'$, $n\ge 1$.
Set $u_n=G\mu_n$, $u=G\mu$. Then $u_n\in D_e(\EE)$ and  by (\ref{eq2.16xy}) and (\ref{eq2.16}),
\[
\EE(u_n,T_k(u_n))=\int_E \widetilde{T_k(u_n)}(x)\,\mu_n(dx)\le k\|\mu\|_{TV}.
\]
By  (\ref{eq2.22}),
$\EE(T_k(u_n),T_k(u_n))\le \EE(u_n,T_k(u_n))$. Hence
\begin{equation}
\label{eq2.19}
\EE(T_k(u_n),T_k(u_n))\le k\|\mu\|_{TV}.
\end{equation}
In particular, $\{T_k(u_n)\}_n$ is weakly relatively compact in $V$. Taking a subsequence if necessary, we can assume that
$T_k(u_n)\rightarrow v$ weakly in $V$ as $n\rightarrow\infty$. By the Banach-Saks theorem, there is a subsequence $(n_l)$ such that the Ces\`aro mean $\{v_N:=\frac1{N}\sum_{l=1}^NT_k(u_{n_l})\}$ converges strongly to $v$ in $V$. Hence,  by (\ref{eq2.5}), $v_N\rightarrow v$ in $L^1(E;g\cdot m)$. On the other hand, $u_n\rightarrow u$ $m$-a.e., so $v_N\rightarrow T_k(u)$ $m$-a.e. Consequently, $T_k(u_n)\rightarrow T_k(u)$ weakly in $V$ as $n\rightarrow\infty$. Therefore
letting $n\rightarrow\infty$ in (\ref{eq2.19}) yields (\ref{eq2.20}).
\end{proof}

\begin{lemma}
\label{lem.eng1}
Let $\mu\in\MM_{0,b}^+$, and let $\widetilde{G\mu}$ be a quasi-continuous $m$-version of $G\mu$.
Then for every $\varepsilon>0$,
\[
\mbox{\rm Cap}_{(0)}(\widetilde{G\mu}>\varepsilon)\le \varepsilon^{-1}\|\mu\|_{TV}.
\]
\end{lemma}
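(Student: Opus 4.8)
The plan is to reduce this weak-type capacity estimate to the energy bound of Lemma \ref{lem2.0} by means of the $0$-order Chebyshev inequality for quasi-continuous functions — the very tool already used in the proof of Lemma \ref{lem2.3}. Since $\mu\ge0$ and $G$ is positivity preserving, $G\mu\ge0$, and by Lemma \ref{lem2.3} it admits a quasi-continuous $m$-version $\widetilde{G\mu}$. I fix $\varepsilon>0$ and work with the truncation $v:=T_\varepsilon(G\mu)$; because $G\mu\ge0$ we have $v=G\mu\wedge\varepsilon$, and Lemma \ref{lem2.0} applied with $k=\varepsilon$ gives $v\in D_e(\EE)$ together with the crucial linear-in-$\varepsilon$ energy bound
\[
\EE(v,v)\le\varepsilon\|\mu\|_{TV}.
\]

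Next I would identify the super-level set of $\widetilde{G\mu}$ with a super-level set of the quasi-continuous version of $v$. Since $T_\varepsilon$ is continuous, $\widetilde{T_\varepsilon(G\mu)}=T_\varepsilon(\widetilde{G\mu})=\widetilde{G\mu}\wedge\varepsilon$ q.e., so for any $\delta\in(0,\varepsilon)$ we have the inclusion
\[
\{\widetilde{G\mu}>\varepsilon\}\subset\{\widetilde{G\mu}>\varepsilon-\delta\}=\{\tilde v>\varepsilon-\delta\}.
\]
Applying the $0$-order version of \cite[(2.1.10)]{FOT} to $v\in D_e(\EE)$ and using the monotonicity of $\mbox{Cap}_{(0)}$ then yields
\[
\mbox{Cap}_{(0)}(\widetilde{G\mu}>\varepsilon)\le\mbox{Cap}_{(0)}(\tilde v>\varepsilon-\delta)\le(\varepsilon-\delta)^{-2}\EE(v,v)\le(\varepsilon-\delta)^{-2}\varepsilon\|\mu\|_{TV}.
\]
Letting $\delta\downarrow0$ gives the claimed bound $\varepsilon^{-1}\|\mu\|_{TV}$.

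The only point that deserves care — and which really carries the content of the estimate — is that the two powers of $\varepsilon$ must cancel correctly: the Chebyshev inequality is quadratic (it produces $\varepsilon^{-2}$), while Lemma \ref{lem2.0} contributes a single compensating factor $\varepsilon$, leaving the sharp linear weak-type exponent $\varepsilon^{-1}$. Truncating at level $\varepsilon$ (rather than leaving $G\mu$ untruncated, which need not belong to $D_e(\EE)$) is precisely what makes Lemma \ref{lem2.0} applicable, and the passage to the strict level $\varepsilon-\delta$ followed by $\delta\downarrow0$ handles the harmless fact that $\tilde v\le\varepsilon$ renders $\{\tilde v>\varepsilon\}$ empty. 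I expect no genuine obstacle beyond this bookkeeping, since the substantive inequality $\EE(v,v)\le\varepsilon\|\mu\|_{TV}$ has already been isolated in Lemma \ref{lem2.0}.
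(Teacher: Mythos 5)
Your proposal is correct and follows essentially the same route as the paper: both reduce the estimate to the energy bound of Lemma \ref{lem2.0} via the $0$-order Chebyshev inequality \cite[(2.1.10)]{FOT} and remove the auxiliary $\delta$ by a limit. The only (immaterial) difference is bookkeeping: the paper truncates at level $\varepsilon+\delta$ and applies Chebyshev at level $\varepsilon$, obtaining $\varepsilon^{-2}(\varepsilon+\delta)\|\mu\|_{TV}$, whereas you truncate at $\varepsilon$ and apply Chebyshev at level $\varepsilon-\delta$, obtaining $(\varepsilon-\delta)^{-2}\varepsilon\|\mu\|_{TV}$; both tend to the same bound.
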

\begin{proof}
By Lemma \ref{lem2.0}, $T_k(\widetilde{G\mu})\in D_e(\EE)$ and (\ref{eq2.20}) holds true.
By this and  the $0$-order version of \cite[(2.1.10)]{FOT}, for all  $\varepsilon,\delta>0$ we have
\begin{align*}
\mbox{Cap}_{(0)}(\widetilde{G\mu}>\varepsilon)
=\mbox{Cap}_{(0)}(T_{\varepsilon+\delta}(\widetilde{G\mu})>\varepsilon)
&\le \varepsilon^{-2}\EE(T_{\varepsilon+\delta}(\widetilde{G\mu}),
T_{\varepsilon+\delta}(\widetilde{G\mu}))\\
&\le\varepsilon^{-2}(\varepsilon+\delta)\|\mu\|_{TV},
\end{align*}
which implies the desired inequality.
\end{proof}

\begin{lemma}
\label{lem2.1}
Assume that $\{\mu_n\}\subset\MM_{0,b}^+$ and $\|\mu_n\|_{TV}\rightarrow 0$.
Then, up to a subsequence, $\widetilde{G\mu_n}\rightarrow 0$ q.e.
\end{lemma}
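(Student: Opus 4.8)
The plan is to combine the capacity estimate of Lemma \ref{lem.eng1} with a Borel--Cantelli type argument based on the countable subadditivity of $\mbox{Cap}_{(0)}$. The point is that Lemma \ref{lem.eng1} already translates smallness of $\|\mu_n\|_{TV}$ into smallness of the capacity of the level sets of $\widetilde{G\mu_n}$, so all that remains is to arrange the thresholds and the total variation norms so that the relevant capacities form a convergent series.

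First I would pass to a subsequence along which the total variation norms decay geometrically. Since $\|\mu_n\|_{TV}\to0$, I can extract a subsequence $(n_k)$ with $\|\mu_{n_k}\|_{TV}\le 4^{-k}$ for all $k\ge1$. Applying Lemma \ref{lem.eng1} with $\varepsilon=2^{-k}$ then gives
\[
\mbox{Cap}_{(0)}(\widetilde{G\mu_{n_k}}>2^{-k})\le 2^{k}\|\mu_{n_k}\|_{TV}\le 2^{-k},\quad k\ge1.
\]
Next, I set $A_k=\{\widetilde{G\mu_{n_k}}>2^{-k}\}$ and $N=\bigcap_{j\ge1}\bigcup_{k\ge j}A_k$. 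Using the countable subadditivity of the $0$-order capacity (a standard property; see \cite[Section 2.1]{FOT}), for each $j\ge1$ I obtain
\[
\mbox{Cap}_{(0)}(N)\le\mbox{Cap}_{(0)}\Big(\bigcup_{k\ge j}A_k\Big)
\le\sum_{k\ge j}\mbox{Cap}_{(0)}(A_k)\le\sum_{k\ge j}2^{-k},
\]
and letting $j\to\infty$ forces $\mbox{Cap}_{(0)}(N)=0$, i.e. $N$ is exceptional. For every $x\notin N$ there is some $j$ with $x\notin A_k$, hence $\widetilde{G\mu_{n_k}}(x)\le 2^{-k}$, for all $k\ge j$; consequently $\widetilde{G\mu_{n_k}}(x)\to0$. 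Thus $\widetilde{G\mu_{n_k}}\to0$ q.e., which is the assertion for the subsequence $(n_k)$.

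I expect no genuine obstacle here, since the heart of the matter is packaged in Lemma \ref{lem.eng1}. The only points requiring a little care are the choice of a sufficiently rapidly decaying subsequence so that the capacity bounds are summable, and the use of the countable subadditivity of $\mbox{Cap}_{(0)}$; both are routine, and together they turn the quantitative capacity estimate into quasi-everywhere convergence in the familiar way.
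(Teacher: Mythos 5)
Your proof is correct and follows essentially the same route as the paper: a subsequence with geometrically decaying total variation, the capacity bound from Lemma \ref{lem.eng1} applied at level $2^{-k}$, and countable subadditivity of $\mbox{Cap}_{(0)}$ to show the limsup set is exceptional (the paper's set $F$ is just the complement of your $N$).
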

\begin{proof}
We can and do  assume that $\|\mu_n\|\le 2^{-2n}$, $n\ge 1$. Then by Lemma \ref{lem.eng1},
\[
\mbox{\rm Cap}_{(0)}(\widetilde{G\mu_n}>2^{-n})\le 2^{-n}.
\]
Let $F=\bigcup_{n\ge 1}\bigcap_{k\ge n} \{\widetilde{G\mu_k}\le 2^{-k}\}$. By the above inequality, $\mbox{\rm Cap}_{(0)}(E\setminus F)=0$. This proves the lemma because by the definition of $F$, $\widetilde{G\mu_n}\rightarrow 0$ q.e. on $F$.
\end{proof}

\begin{lemma}
There exists a strictly positive function $g\in\BB(E)$ such that $\|Gg\|_\infty<\infty$.
\end{lemma}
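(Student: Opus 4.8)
The plan is to tame the potential of the transience reference function by a concave rearrangement. Let $g_0$ be the reference function of (\ref{eq2.5}): it is bounded, strictly positive $m$-a.e. and lies in $L^1(E;m)$. Since it satisfies the reference inequality, $g_0\cdot m\in\MM_{0,b}\cap V'$, so by (\ref{eq2.16}) the potential $h:=Gg_0$ belongs to $V=D_e(\EE)$ and, by Lemma \ref{lem2.3}, admits a quasi-continuous $m$-version which is finite q.e. Fix the bounded concave function $\Phi(r)=r/(1+r)$, $r\ge0$, and set $v:=\Phi(h)$ and $g:=\Phi'(h)\,g_0=g_0/(1+h)^2$. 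Then $v\in D_e(\EE)$ (by \cite[Theorem 1.5.3]{FOT}, as in Lemma \ref{lem2.2}) and $0\le v\le1$, while $0<g\le g_0$ $m$-a.e., so $g\in\BB^+(E)$ is strictly positive $m$-a.e. and $g\cdot m\in\MM_{0,b}\cap V'$; in particular $Gg\in V$ is well defined and $\EE(Gg,u)=\int_E\tilde u\,g\,dm$ for $u\in D_e(\EE)$. It therefore suffices to prove $Gg\le v$ $m$-a.e., for then $\|Gg\|_\infty\le\|v\|_\infty\le1$.

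The heart of the matter is the inequality
\[
\EE(v,u)\ge\int_E \Phi'(h)\,u\,g_0\,dm=\int_E u\,g\,dm,\qquad 0\le u\in D_e(\EE).
\]
To obtain it I would work with the energy measure $\mu_{\langle h\rangle}$ of $h$ (the carr\'e du champ; see \cite[Chapter 3]{FOT}). The chain and product rules for energy measures give, for a nonnegative $u\in D(\EE)\cap C_c(E)$,
\[
\EE(\Phi(h),u)-\EE(h,\Phi'(h)u)=-\tfrac12\int_E u\,\Phi''(h)\,d\mu_{\langle h\rangle},
\]
whose right-hand side is nonnegative because $\Phi''\le0$, $u\ge0$ and $\mu_{\langle h\rangle}\ge0$. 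Since $\EE(h,\psi)=\int_E\tilde\psi\,g_0\,dm$ for $\psi\in D_e(\EE)$ by (\ref{eq2.16}) and (\ref{eq2.16xy}), taking $\psi=\Phi'(h)u$ yields $\EE(h,\Phi'(h)u)=\int_E\Phi'(h)\,u\,g_0\,dm$, and the displayed inequality follows on this dense class. As both sides are $\EE$-continuous in $u$ (the right-hand side equals $\EE(Gg,u)$), it extends to all nonnegative $u\in D_e(\EE)$.

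Finally I would invoke the comparison principle. Writing $w:=v-Gg\in D_e(\EE)$ and testing the inequality against $u:=w^-\ge0$ gives $\EE(w,w^-)\ge0$. On the other hand the Markovian property of $\EE$ gives $\EE(w^+,w^-)\le0$, so $\EE(w,w^-)=\EE(w^+,w^-)-\EE(w^-,w^-)\le-\EE(w^-,w^-)$; hence $\EE(w^-,w^-)=0$, and transience (\ref{eq2.5}) forces $w^-=0$ $m$-a.e. Thus $Gg\le v\le1$ $m$-a.e., which proves $\|Gg\|_\infty<\infty$ for the strictly positive $g=g_0/(1+h)^2$. (If positivity everywhere rather than $m$-a.e. is wanted, one redefines $g$ to equal $1$ on the $m$-null set $\{g_0=0\}\cup\{h=\infty\}$, which does not affect $Gg$.)

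The main obstacle is the first step: justifying the chain/product-rule computation for the possibly unbounded potential $h$ and the weight $\Phi'(h)u$, i.e. checking that $\Phi(h),\,\Phi'(h)u\in D_e(\EE)$ and that the correction term is finite. Here the boundedness of $\Phi,\Phi',\Phi''$ together with the finiteness $\mu_{\langle h\rangle}(E)=2\EE(h,h)<\infty$ are what make the argument manageable. Alternatively, this inequality can be derived directly from the approximating forms $\EE^{(\beta)}$ used in the proof of Lemma \ref{lem2.2}, by exploiting the concavity of $\Phi$ in the representation (\ref{eq2.21}) and passing to the limit $\beta\to\infty$, thereby avoiding explicit recourse to the energy measure.
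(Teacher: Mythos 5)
Your argument is correct in substance but follows a genuinely different route from the paper. The paper starts from a strictly positive $h\in L^1(E;m)$ with $Gh<\infty$ (guaranteed by transience), invokes the domination principle of \cite[Lemma 2.2.4]{FOT} to get $G(\mathbf{1}_{\{Gh\le k\}}h)\le k$ for each level $k$, and then simply sets $g=\sum_{n\ge0}\frac{1}{2^n(n+1)}\mathbf{1}_{\{Gh\le n\}}h$, so that $Gg\le\sum_n 2^{-n}$ by monotone convergence. That proof is a few lines and needs no weak maximum principle. Your proof instead tames the potential by the concave map $\Phi(r)=r/(1+r)$: you show $v=\Phi(Gg_0)$ is a bounded supersolution for the weight $g=\Phi'(Gg_0)g_0$ and conclude $Gg\le v\le1$ by comparison. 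This buys a closed-form $g$ and an explicit bound, at the price of two extra ingredients: the chain-rule inequality $\EE(\Phi(h),u)\ge\EE(h,\Phi'(h)u)$ for $u\ge0$, and the weak maximum principle via $\EE(w^+,w^-)\le0$ (both of which you handle correctly).

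One caveat: the displayed identity $\EE(\Phi(h),u)-\EE(h,\Phi'(h)u)=-\frac12\int_E u\,\Phi''(h)\,d\mu_{\langle h\rangle}$ is valid only for strongly local forms, whereas the paper's setting explicitly includes nonlocal operators such as $\Delta^{\alpha/2}$; for a general regular Dirichlet form the jumping and killing parts contribute additional terms. Fortunately only the \emph{inequality} is needed, and it does hold in general: in the representation (\ref{eq2.21}) of $\EE^{(\beta)}$, concavity of $\Phi$ gives pointwise $\bigl(\Phi(a)-\Phi(b)\bigr)\bigl(u(x)-u(y)\bigr)-(a-b)\bigl(\Phi'(a)u(x)-\Phi'(b)u(y)\bigr)\ge0$ for $u\ge0$ (group the terms in $u(x)$ and $u(y)$ and use the tangent-line inequality at $a$ and at $b$), and $\Phi(a)\ge a\Phi'(a)$ handles the killing term; letting $\beta\to\infty$ as in Lemma \ref{lem2.2} gives the claim. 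This is exactly the alternative you sketch at the end, and it should be taken as the actual argument rather than the energy-measure identity. A second, minor point: concluding $g_0\cdot m\in V'$ uses the transience inequality with $\EE(u,u)^{1/2}$ on the right-hand side (the standard form of \cite[(1.5.6)]{FOT}), not $\EE(u,u)$ as literally printed in (\ref{eq2.5}); the latter is evidently a typographical slip in the paper, so your use is legitimate.
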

\begin{proof}
Since $\EE$ is transient there exists a strictly positive  $h\in L^1(E;m)$ such that $Gh<\infty$. By \cite[Theorem 2.2.4]{FOT}, there exist a nest $\{F_n\}$ such that $\mathbf{1}_{F_n}\cdot h\in L^1(E;m)\cap V'$. Write $H_{n,k}=\{G(\mathbf{1}_{F_n}h)\le k\}$ and $H_k=\{Gh\le k\}$. By \cite[Lemma 2.2.4]{FOT}, $G(\mathbf{1}_{F_n\cap H_{n,k}}h)\le k$. Letting $n\rightarrow \infty$ yields $G(\mathbf{1}_{H_{k}}h)\le k$. Set $g=\sum_{n=0}^{\infty}\frac{1}{2^n(n+1)}\mathbf{1}_{H_n}h$. Then
\[
Gg=\sum_{n=0}^\infty \frac{1}{2^n(n+1)}\,G(\mathbf{1}_{H_n}h)\le \sum_{n=0}^{\infty}2^{-n},
\]
which proves the lemma.
\end{proof}

\begin{lemma}
\label{lm2.n2} For any positive $\eta\in L^1(E;m)$ such that
$\|G\eta\|_{\infty}<\infty$ and any positive $\mu\in \MM_{0,b}$,
\begin{equation}
\label{eq2.lmab}
\langle\mu, \widetilde {G\eta}\rangle=\int_E\eta(x)G\mu(x)\,m(dx),
\end{equation}
where $\widetilde {G\eta}$ is a quasi-continuous $m$-version of $G\eta$.
\end{lemma}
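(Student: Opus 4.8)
The plan is to reduce, via a generalized nest, to the case $\nu\in\MM_{0,b}^+\cap V'$, and then to exploit the symmetry of the form $\EE$ together with the duality relations (\ref{eq2.16}) and (\ref{eq2.16xy}). First I would choose a generalized nest $\{F_n\}$ for $\mu$ such that $\mu_n:=\fch_{F_n}\cdot\mu\in\MM_{0,b}\cap V'$ for every $n$; such a nest exists by the discussion preceding (\ref{eq2.14}). Since $\mu\ge0$ and $F_n\uparrow$, the measures $\mu_n$ increase setwise to $\mu$, with $\mu(E\setminus\bigcup_nF_n)=0$ by (\ref{eq2.8}), while $G\mu_n\uparrow G\mu$ by (\ref{eq2.14}). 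Both sides of (\ref{eq2.lmab}) are then monotone limits: on the left $\langle\mu_n,\widetilde{G\eta}\rangle=\int_{F_n}\widetilde{G\eta}\,d\mu\uparrow\langle\mu,\widetilde{G\eta}\rangle$ by monotone convergence (note $\widetilde{G\eta}\ge0$ q.e.), and on the right $\int_E\eta\,G\mu_n\,dm\uparrow\int_E\eta\,G\mu\,dm$. Hence it suffices to prove (\ref{eq2.lmab}) for a fixed $\nu:=\mu_n\in\MM_{0,b}^+\cap V'$.

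The key auxiliary fact I would establish is that $\eta\cdot m\in V'$ (which is also diffuse, being absolutely continuous with respect to $m$). Here the hypothesis $\|G\eta\|_\infty<\infty$ is genuinely used. Since $\eta\wedge j\in L^1(E;m)\cap L^2(E;m)$ and $G(\eta\wedge j)\uparrow G\eta$, testing the defining identity $\EE_\alpha(G_\alpha(\eta\wedge j),\cdot)=(\eta\wedge j,\cdot)$ against $G_\alpha(\eta\wedge j)$ gives
\[
\EE(G_\alpha(\eta\wedge j),G_\alpha(\eta\wedge j))\le\EE_\alpha(G_\alpha(\eta\wedge j),G_\alpha(\eta\wedge j))=(\eta\wedge j,G_\alpha(\eta\wedge j))\le\int_E\eta\,G\eta\,dm\le\|G\eta\|_\infty\|\eta\|_{L^1(E;m)}<\infty .
\]
Letting $\alpha\downarrow0$ and $j\to\infty$, and using that a uniform bound on $\EE$-energy together with $m$-a.e.\ convergence forces membership in the Hilbert space $V=D_e(\EE)$ (the same weak-compactness and Banach--Saks argument as in the proof of Lemma \ref{lem2.0}), I conclude $G\eta\in D_e(\EE)$. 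Thus $\eta\cdot m\in V'$ with $G(\eta\cdot m)=G\eta$, the identification being the one recorded after (\ref{eq2.18}) via \cite[Lemma 2.2.11]{FOT}.

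Finally, with $\nu\in\MM_{0,b}^+\cap V'$ and $\eta\cdot m\in\MM_{0,b}^+\cap V'$, both $G\nu$ and $G\eta$ lie in $D_e(\EE)$, and I would chain the duality relations with the symmetry of $\EE$:
\[
\langle\nu,\widetilde{G\eta}\rangle=\langle\langle\nu,G\eta\rangle\rangle=\EE(G\nu,G\eta)=\EE(G\eta,G\nu)=\langle\langle\eta\cdot m,G\nu\rangle\rangle=\langle\eta\cdot m,\widetilde{G\nu}\rangle=\int_E\eta\,G\nu\,dm,
\]
where the first and fifth equalities use (\ref{eq2.16xy}), the second and fourth use (\ref{eq2.16}) (with $G(\eta\cdot m)=G\eta$), the middle one is the symmetry of $\EE$, and the last uses $\widetilde{G\nu}=G\nu$ $m$-a.e. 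Combined with the reduction of the first paragraph, this yields (\ref{eq2.lmab}).

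The main obstacle is the auxiliary step $\eta\cdot m\in V'$: the reduction by the nest and the closing chain of equalities are essentially formal manipulations of the pairings $\langle\cdot,\cdot\rangle$ and $\langle\langle\cdot,\cdot\rangle\rangle$, whereas it is only in showing $G\eta\in D_e(\EE)$ that the boundedness of $G\eta$ (with $\eta\in L^1(E;m)$) enters, through the energy bound $\int_E\eta\,G\eta\,dm\le\|G\eta\|_\infty\|\eta\|_{L^1(E;m)}$ and the passage to the limit in $V$.
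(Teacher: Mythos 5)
Your argument is correct, but it organizes the reduction differently from the paper. The paper truncates \emph{both} inputs with a single generalized nest: it first proves (\ref{eq2.lmab}) when $\mu\in\MM_{0,b}\cap V'$ \emph{and} $\eta\in L^1(E;m)\cap V'$ (where the identity is exactly the symmetry $\EE(G\mu,G\eta)=\EE(G\eta,G\mu)$ read through (\ref{eq2.16xy}) and (\ref{eq2.16})), and then passes to the limit in $\langle\mu_n,\widetilde{G\eta_n}\rangle=\int_E\eta_n\,G\mu_n\,dm$ along a nest for which both $\fch_{F_n}\cdot\mu$ and $\fch_{F_n}\eta\cdot m$ lie in $V'$, using monotone convergence on both sides. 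You instead truncate only $\mu$ and prove as an auxiliary step that $\eta\cdot m$ itself belongs to $V'$, via the energy bound $\EE(G_\alpha(\eta\wedge j),G_\alpha(\eta\wedge j))\le\int_E\eta\,G\eta\,dm\le\|G\eta\|_\infty\|\eta\|_{L^1(E;m)}$. That auxiliary claim is true and is a genuinely stronger use of the hypothesis $\|G\eta\|_\infty<\infty$ (which the paper's proof barely needs, since monotone convergence handles the limit regardless); it buys you a cleaner limit passage on the left-hand side, at the cost of an extra lemma. One step of yours is compressed: knowing $G\eta\in D_e(\EE)$ with finite energy does not by itself identify $G\eta$ as the Riesz representative of $\eta\cdot m$; to conclude $\eta\cdot m\in V'$ you should pair against an arbitrary nonnegative $u\in D(\EE)$ and use the $\EE_\alpha$-Cauchy--Schwarz inequality, $(\eta\wedge j,u)=\EE_\alpha(G_\alpha(\eta\wedge j),u)\le(\eta\wedge j,G_\alpha(\eta\wedge j))^{1/2}\EE_\alpha(u,u)^{1/2}$, and then let $\alpha\downarrow0$, $j\to\infty$; after that, \cite[Lemma 2.2.11]{FOT} (as recorded after (\ref{eq2.18})) gives $G(\eta\cdot m)=G\eta$ and your closing chain of equalities goes through. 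With that detail supplied, your proof is a valid, slightly longer alternative to the paper's.
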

\begin{proof}
We first assume that $\mu\in\MM_{0,b}\cap V'$ and $\eta\in L^1(E;m)\cap V'$. Then
\[
\EE(G\mu,G\eta)=\int_E \widetilde{G\eta}(x)\,\mu(dx),\qquad
\EE(G\eta,G\mu)=\int_E\eta(x) G\mu(x)\,m(dx).
\]
Since $\EE$ is symmetric, this implies  (\ref{eq2.lmab}). Now assume that $\mu\in\MM_{0,b}$, $\eta\in L^1(E;m)$ and   $G\eta$
is bounded. Let $\{F_n\}$ be a generalized nest such that $\eta_n=\mathbf{1}_{F_n}\cdot\eta\in L^1(E;m)\cap V'$ and $\mu_n=\mathbf{1}_{F_n}\cdot\mu\in \MM_{0,b}\cap V'$. By what has already been proved,
\[
\langle\mu_n, \widetilde {G\eta_n}\rangle=\int_E\eta(x) G\mu_n(x)\,m(dx), \quad n\ge1.
\]
Letting $n\rightarrow\infty$ we get (\ref{eq2.lmab}).
\end{proof}

In the rest of this section we assume that the absolutely continuity condition (\ref{eq1.8}) is satisfied. Condition (\ref{eq1.8}) was introduced by P.-A. Meyer \cite{M}. It is sometimes called  condition (L) (see \cite[p. 246]{DM}). By \cite[Theorem 4.2.4]{FOT}, condition  (\ref{eq1.8}) is equivalent to (\ref{eq1.9}). If (\ref{eq1.8})  is satisfied, then for any $\alpha>0$ there  exists a positive $\BB(E)\otimes\BB(E)$-measurable function $r_{\alpha}:E\times
E\rightarrow\BR$ such that $r_{\alpha}(x,y)=r_{\alpha}(y,x)$, $x,y\in E$, and for any $f\in\BB^+(E)$,
\begin{equation}
\label{eq2.6}
G_{\alpha}f(x)=\int_Er_{\alpha}(x,y)f(y)\,m(dy)\quad\mbox{for } m \mbox{-a.e. }  x\in E.
\end{equation}
Moreover, there exists a positive $\BB(E)\otimes\BB(E)$-measurable function $r:E\times E\rightarrow\BR$ such that $r(x,y)=r(y,x)$, $x,y\in E$, and for any $f\in\BB^+(E)$,
\[
Gf(x)=\int_Er(x,y)f(y)\,m(dy),\quad\mbox{for } m \mbox{-a.e. }  x\in E.
\]
In fact, $r(x,y)=\lim_{\alpha\downarrow0}r_{\alpha}(x,y)$ (see the remarks in \cite[p. 256]{BG}).
% We call $r$ the resolvent density.

\begin{lemma}
\label{lm2.n1} Assume that \mbox{\rm(\ref{eq1.8})} is satisfied.
If $\mu\in \MM_{0,b}$, then for $m$-a.e. $x\in E$,
\begin{equation}
\label{eq2.13} G\mu(x)=\int_Er(x,y)\,\mu(dy).
\end{equation}
\end{lemma}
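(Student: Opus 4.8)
The plan is to reduce to a positive measure and then verify the identity ``weakly'', by integrating both sides against a sufficiently rich family of test functions. Since $\mu=\mu^+-\mu^-$ with $\mu^+,\mu^-\in\MM_{0,b}^+$, and since by Lemma \ref{lem2.3} the potentials $G\mu^+,G\mu^-$ are finite $m$-a.e., it is enough to prove (\ref{eq2.13}) for $\mu\in\MM_{0,b}^+$; the signed case then follows by subtraction, both sides of (\ref{eq2.13}) splitting accordingly. Let $g\in\BB^+(E)$ be the strictly positive function with $\|Gg\|_\infty<\infty$ constructed in the lemma above, and write $w(x)=\int_E r(x,y)\,\mu(dy)$. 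Because $g>0$ $m$-a.e., to prove $w=G\mu$ $m$-a.e. it suffices to show $\int_A g\,w\,dm=\int_A g\,G\mu\,dm$ for every Borel set $A$, i.e. to test against the functions $\eta=\mathbf{1}_A\cdot g$; note $\eta\in L^1(E;m)$ and $\|G\eta\|_\infty\le\|Gg\|_\infty<\infty$, so $\eta$ is admissible in Lemma \ref{lm2.n2}.

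Fix such an $\eta$. On the one hand, Lemma \ref{lm2.n2} gives $\int_E\eta\,G\mu\,dm=\langle\mu,\widetilde{G\eta}\rangle$. On the other hand, since $r\ge0$ and $\mu\ge0$, Tonelli's theorem together with the symmetry $r(x,y)=r(y,x)$ yields
\[
\int_E\eta(x)\,w(x)\,m(dx)=\int_E\Big(\int_E r(y,x)\,\eta(x)\,m(dx)\Big)\mu(dy)=\int_E\phi_\eta(y)\,\mu(dy),
\]
where $\phi_\eta(y):=\int_E r(y,x)\eta(x)\,m(dx)$. By the kernel representation of $Gf$ for $f\in\BB^+(E)$ we have $\phi_\eta=G\eta$ $m$-a.e., and moreover $0\le\phi_\eta\le Gg$ is bounded. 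Thus the whole argument reduces to identifying $\int_E\phi_\eta\,d\mu$ with $\langle\mu,\widetilde{G\eta}\rangle=\int_E\widetilde{G\eta}\,d\mu$.

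The main obstacle is exactly this last identification: $\mu$ may be singular with respect to $m$, whereas we only know $\phi_\eta=\widetilde{G\eta}$ $m$-a.e. Here I would use that $\mu$ is diffuse, so it charges no set of $\mbox{Cap}_{(0)}$-capacity zero; hence it is enough to show $\phi_\eta=\widetilde{G\eta}$ q.e., and since two quasi-continuous functions agreeing $m$-a.e. agree q.e. (the consequence of \cite[Lemma 2.1.4]{FOT} used freely above), it suffices to check that the kernel potential $\phi_\eta$ is itself quasi-continuous. This is precisely where the absolute continuity condition (\ref{eq1.8}) enters: under (\ref{eq1.8}) the bounded potential $\phi_\eta=\int_E r(\cdot,y)\eta(y)\,m(dy)$ is an excessive function for the associated semigroup, and excessive functions are quasi-continuous (cf. \cite[Section 4.2]{FOT}). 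Granting this, $\phi_\eta=\widetilde{G\eta}$ q.e., so $\int_E\phi_\eta\,d\mu=\langle\mu,\widetilde{G\eta}\rangle$, whence $\int_A g\,w\,dm=\int_A g\,G\mu\,dm$ for every Borel $A$; as $g>0$ this forces $w=G\mu$ $m$-a.e., completing the positive case and, by the reduction above, the lemma.
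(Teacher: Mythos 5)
Your argument is correct, but it follows a genuinely different route from the paper's. The paper's proof is a two-step monotone limit: take a generalized nest $\{F_n\}$ with $\mu_n=\mathbf{1}_{F_n}\cdot\mu\in\MM_{0,b}\cap V'$, quote from Fukushima--Oshima--Takeda (Exercise 4.2.2 together with Lemma 5.1.3) the representation $G_\alpha\mu_n(x)=\int_E r_\alpha(x,y)\,\mu_n(dy)$ for $m$-a.e.\ $x$, let $\alpha\downarrow0$ using $r=\lim_{\alpha\downarrow0}r_\alpha$, and then let $n\rightarrow\infty$ using (\ref{eq2.8}), (\ref{eq2.14}) and monotone convergence. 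You instead verify the identity weakly, testing against $\eta=\mathbf{1}_A\cdot g$ and combining Tonelli, the symmetry of $r$, and Lemma \ref{lm2.n2}; this trades the cited representation of $G_\alpha\mu_n$ for the need to identify the kernel potential $\phi_\eta=\int_E r(\cdot,y)\eta(y)\,m(dy)$ with $\widetilde{G\eta}$ quasi-everywhere rather than merely $m$-a.e. You correctly isolate this as the crux, and your resolution is sound: with the version of $r$ from \cite{BG} that is excessive in each variable, $\phi_\eta$ is a bounded Borel excessive function, hence finely continuous, hence quasi-continuous by \cite[Theorem 4.2.2]{FOT}; your citation ``cf.\ Section 4.2'' should be sharpened to exactly that combination, since this is the one place where (\ref{eq1.8}) actually does its work in your argument. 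The remaining steps (reduction to $\mu\ge0$, admissibility of $\eta=\mathbf{1}_A\cdot g$ in Lemma \ref{lm2.n2}, finiteness of $\int_E g\,G\mu\,dm$ so that equality of the two finite measures $gw\cdot m$ and $gG\mu\cdot m$ forces $w=G\mu$ $m$-a.e.) all check out. What each approach buys: the paper's is shorter because the q.e.\ identification is already packaged in the results it cites, while yours makes visible, via duality and quasi-continuity of excessive functions, why the diffuseness of $\mu$ and condition (\ref{eq1.8}) are exactly what is needed.
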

\begin{proof}
Let $\{F_n\}$ be a generalized nest such that $\mu_n=\mathbf{1}_{F_n}\cdot\mu\in \MM_{0,b}\cap V'$. By
\cite[Exercise 4.2.2, Lemma 5.1.3]{FOT}, for any $\alpha>0$ we have
\[
G_{\alpha}\mu_n(x)=\int_Er_{\alpha}(x,y)\,\mu_n(dy)
\]
for $m$-a.e. $x\in E$. Letting $\alpha\downarrow0$ in the above equality yields (\ref{eq2.13}) with $\mu$ replaced by $\mu_n$.  Then, using (\ref{eq2.8}), (\ref{eq2.14}) and the monotone convergence, we get (\ref{eq2.13}) for $\mu$.
\end{proof}

\section{Existence and uniqueness of solutions}
\label{sec3}

Throughout this section, we  assume that $\mu\in\MM_{0,b}$ and $f:E\times\BR\rightarrow\BR$ is
a Carath\'eodory function, i.e.  $f(\cdot,y)$ is measurable on $E$ for each fixed $y\in\BR$, and $f(x,\cdot)$ is continuous on $\BR$ for each fixed $x\in E$.

Following \cite{KR:JFA} we adopt the following definition.
\begin{definition}
\label{def3.1} We say that $u:E\rightarrow\BR$ is a solution of
(\ref{eq1.1}) (in the sense of duality) if
\begin{enumerate}
\item[(a)]$f(\cdot,u)\in L^1(E;m)$,
\item[(b)]for any $\eta\in L^1(E;m)$ such that $G|\eta|$ is bounded  we have
\begin{equation}
\label{eq2.3}
\int_E u(x)\eta(x)\,m(dx)=\int_E f(x,u(x))G\eta(x)\,m(dx)
+\int_E \widetilde{G\eta}(x)\,\mu(dx).
\end{equation}
\end{enumerate}
\end{definition}

%\\begin{remark}
%If $u$ is a solution of (\ref{eq1.1}), then $u\in L^1(E;|\eta|\cdot m)$ for any  $\eta\in L^1(E;m)$
%such that $G|\eta|$ is bounded.
%\\
%K:\\
%Niech $B=\{x\in E:u(x)\ge0\}$. Jezeli $\eta\in L^1(E;m)$ i $G|\eta|$ jest ograniczona, to takze %$\fch_B|\eta|\in L^1(E;m)$ i $G(\fch_B|\eta|)$ jest ograniczona. Dlatego %$\int_Eu^+|\eta|\,dm=\int_E\fch_Bu|\eta|\,dm
%=\int_E f(\cdot,u)G(\fch_B|\eta|)\,dm+\int_E \widetilde{G\fch_B|\eta|}\,d\mu<\infty$. Podobnie pokazujemy, ze %$\int$_Eu^-|\eta|\,dm<\infty$.\\
%KK
%\end{remark}

\begin{remark}
If $u$ is a solution of (\ref{eq1.1}), then $u$ has a
quasi-continuous $m$-version, because then $u=G(f(\cdot,u)\cdot
m+\mu)$ $m$-a.e. by Lemma \ref{lm2.n2}, so the existence of a
quasi-continuous $m$-version follows from  Lemma \ref{lem2.3}.
\end{remark}

Recall that an increasing sequence $\{F_n\}$ of closed subsets of $E$ is called a generalized nest if $\mbox{Cap}_{(0)}(K\setminus F_n)\rightarrow 0$ for any compact $K\subset E$.
\begin{proposition}
\label{prop.equiv}
Let $u$ be a measurable function such that $f(\cdot,u)\in L^1(E;m)$. Then the following assertions
are equivalent:
\begin{enumerate}
\item[\rm(i)] $u$ is a solution to \mbox{\rm(\ref{eq1.1})}.

\item[\rm(ii)] $u=G(f(\cdot,u))+G\mu$ $m$-a.e.

\item[\rm(iii)] For any generalized nest $\{F_n\}$ such
that $\mathbf{1}_{F_n}\cdot(f(\cdot,u)\cdot m+\mu)\in\MM_{0,b}\cap V'$ we have $u_n\rightarrow u$ $m$-a.e., where $u_n\in D_e(\EE)$ is the unique solution of the problem
\begin{equation}
\label{star1}
\EE(u_n,\eta)=\langle\mathbf{1}_{F_n}f(\cdot,u)\cdot m,\eta\rangle+\langle\mathbf{1}_{F_n}\cdot\mu,\tilde \eta\rangle,
\quad \eta\in D_e(\EE)
\end{equation}
\item[\rm(iv)] For some generalized nest $\{F_n\}$ such that
$\mathbf{1}_{F_n}\cdot(f(\cdot,u)\cdot m+\mu)\in\MM_{0,b}\cap V'$
we have $u_n\rightarrow u$ $m$-a.e., where $u_n\in D_e(\EE)$ is
the unique solution of \mbox{\rm(\ref{star1})}.

\item[\rm(v)]  For any generalized nest $\{F_n\}$ such
that $\mathbf{1}_{F_n}\cdot(f(\cdot,u)\cdot m+\mu)
\in\MM_{0,b}\cap V'$ we have $\tilde u_n\rightarrow\tilde  u$
q.e., where $u_n\in D_e(\EE)$ is the unique solution of
\mbox{\rm(\ref{star1})}.

\item[\rm(vi)]  For some generalized nest $\{F_n\}$ such
that $\mathbf{1}_{F_n}\cdot(f(\cdot,u)\cdot m+\mu)
\in\MM_{0,b}\cap V'$ we have $\tilde u_n\rightarrow\tilde  u$
q.e., where $u_n\in D_e(\EE)$ is the unique solution of
\mbox{\rm(\ref{star1})}.
\end{enumerate}
\end{proposition}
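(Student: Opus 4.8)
The plan is to reduce the whole chain to the single identity $u=w$ $m$-a.e., where $w:=G(f(\cdot,u))+G\mu$, and to observe that the approximants $u_n$ of (\ref{star1}) converge to $w$ \emph{unconditionally}, i.e. without assuming anything about $u$ beyond $f(\cdot,u)\in L^1(E;m)$.

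First I would prove (i)$\Leftrightarrow$(ii). Fix $\eta\in L^1(E;m)$ with $G|\eta|$ bounded. The symmetry of the resolvents $G_\alpha$ together with the monotone passage (\ref{eq2.18}) gives $\int_E G(f(\cdot,u))\,\eta\,dm=\int_E f(\cdot,u)\,G\eta\,dm$, and Lemma \ref{lm2.n2}, applied to $|\eta|$, $|\mu|$ and the Jordan decompositions, gives $\int_E (G\mu)\,\eta\,dm=\int_E\widetilde{G\eta}\,d\mu$; both integrals are absolutely convergent because $f(\cdot,u)\in L^1(E;m)$ and $\|G|\eta|\|_\infty<\infty$. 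Hence $\int_E w\,\eta\,dm$ equals the right-hand side of (\ref{eq2.3}), so (ii) yields (i). Conversely, under (i) these two identities turn (\ref{eq2.3}) into $\int_E(u-w)\eta\,dm=0$; taking $\eta=\mathbf{1}_A g$, where $g$ is the strictly positive function with $\|Gg\|_\infty<\infty$ furnished by the (unlabelled) lemma preceding Lemma \ref{lm2.n2} and $A\subset E$ is an arbitrary Borel set, forces $(u-w)g=0$ $m$-a.e. and hence $u=w$ $m$-a.e., which is (ii).

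The heart of the argument is the following unconditional fact. For \emph{any} generalized nest $\{F_n\}$ with $\mathbf{1}_{F_n}\cdot(f(\cdot,u)\cdot m+\mu)\in\MM_{0,b}\cap V'$, the unique solution $u_n$ of (\ref{star1}) equals $G(\mathbf{1}_{F_n}\cdot(f(\cdot,u)\cdot m+\mu))$ by (\ref{eq2.16}) and (\ref{eq2.16xy}), and it converges to $w$ both $m$-a.e. and, in its quasi-continuous version, q.e. To see this, decompose along the four positive diffuse measures $\rho_1=f^+(\cdot,u)\cdot m$, $\rho_2=f^-(\cdot,u)\cdot m$, $\rho_3=\mu^+$, $\rho_4=\mu^-$, so that by additivity of $G$ one has $u_n=\sum_i\pm G(\mathbf{1}_{F_n}\rho_i)$ and $w=\sum_i\pm G\rho_i$. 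Since each $\rho_i\in\MM_{0,b}^+$, (\ref{eq2.8}) gives $\rho_i(E\setminus\bigcup_nF_n)=0$, whence $\mathbf{1}_{F_n}\rho_i\nearrow\rho_i$ and, by the monotone continuity of $G$ on smooth measures together with (\ref{eq2.14}), $G(\mathbf{1}_{F_n}\rho_i)\nearrow G\rho_i$ $m$-a.e.; summing gives $u_n\to w$ $m$-a.e. For the q.e. statement I would apply Lemma \ref{lem.eng1} to the positive tail measures $\mathbf{1}_{E\setminus F_n}\rho_i$: since $\widetilde{G\rho_i}-\widetilde{G(\mathbf{1}_{F_n}\rho_i)}=\widetilde{G(\mathbf{1}_{E\setminus F_n}\rho_i)}$ q.e., Lemma \ref{lem.eng1} yields $\mbox{Cap}_{(0)}(\widetilde{G\rho_i}-\widetilde{G(\mathbf{1}_{F_n}\rho_i)}>\varepsilon)\le\varepsilon^{-1}\rho_i(E\setminus F_n)\to0$; as these nonnegative differences decrease q.e. in $n$, the subsequential q.e. convergence produced by the Borel--Cantelli argument of Lemma \ref{lem2.1} propagates to the whole sequence, and summation gives $\tilde u_n\to\tilde w$ q.e.

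Granting this, the remaining equivalences are immediate, and the quantified statements are non-vacuous because a nest with $\mathbf{1}_{F_n}\cdot(f(\cdot,u)\cdot m+\mu)\in\MM_{0,b}\cap V'$ exists by the $0$-order version of \cite[Theorem 2.2.4]{FOT}. If (ii) holds, then $\tilde w=\tilde u$ q.e., so the unconditional fact gives $\tilde u_n\to\tilde u$ q.e. along every admissible nest, i.e. (v); and (v) trivially implies (iii) (q.e. convergence forces $m$-a.e. convergence, exceptional sets being $m$-null) and (vi), while (iii) implies (iv). Conversely, if (iv) holds then $u_n\to u$ $m$-a.e. along some admissible nest, but $u_n\to w$ $m$-a.e. by the unconditional fact, so $u=w$ $m$-a.e., i.e. (ii); and (vi)$\Rightarrow$(ii) in the same way via the q.e. statement. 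This closes the cycle and, together with (i)$\Leftrightarrow$(ii), proves the equivalence of (i)--(vi). The main obstacle is precisely the q.e. (capacity) convergence in the unconditional fact: controlling $\tilde u_n-\tilde w$ in $\mbox{Cap}_{(0)}$ rather than merely in $m$-measure is what forces the use of Lemma \ref{lem.eng1} and of the monotone upgrade from subsequential to full q.e. convergence; the rest is bookkeeping around the symmetry, additivity and monotone continuity of $G$.
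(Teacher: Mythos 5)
Your proof is correct and follows essentially the same route as the paper's: the equivalence (i)$\Leftrightarrow$(ii) via Lemma \ref{lm2.n2}, the identification $u_n=G(\mathbf{1}_{F_n}f(\cdot,u))+G(\mathbf{1}_{F_n}\cdot\mu)$ through (\ref{eq2.16}) and (\ref{eq2.16xy}), and the capacity estimates of Lemmas \ref{lem.eng1}--\ref{lem2.1} to get q.e.\ convergence of the approximants, closing the same cycle of implications. Your explicit upgrade from subsequential to full q.e.\ convergence using the monotone decrease of the tail potentials is a detail the paper leaves implicit, but it is the same argument.
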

\begin{proof}
The equivalence of (i) and (ii) follows from Lemma \ref{lm2.n2}. Obviously, (iii) implies (iv), (v) implies (iii) and (vi),  and (vi) implies  (iv). What is left ist to show that (ii) implies (v) and (iv) implies (ii). Let $\{F_n\}$ be a generalized nest for $f(\cdot,u)\cdot m+\mu$, and let
\begin{equation}
\label{eq2.sol}
u_n=G(\mathbf{1}_{F_n}f(\cdot,u))+G(\mathbf{1}_{F_n}\cdot\mu).
\end{equation}
By the definition of $\{F_n\}$, $u_n\in V$. Moreover, by (\ref{eq2.16xy}) and (\ref{eq2.16}),  $u_n$ satisfies (\ref{star1}).
If (ii) is satisfied, then $|\tilde u-\tilde u_n|\le \widetilde{G\mathbf{1}_{E\setminus F_n}|f(\cdot,u)|}+\widetilde{G\mathbf{1}_{E\setminus F_n}\cdot|\mu|}$ q.e. Hence, by Lemma \ref{lem2.1}, $\tilde u_n\rightarrow \tilde u$ q.e. Now assume  (iv). By (\ref{eq2.16}), since $u_n$ solves (\ref{star1}), it  is given by  (\ref{eq2.sol}). Therefore letting $n\rightarrow \infty$
in (\ref{eq2.sol}) we get (ii).
\end{proof}

\begin{remark}
\label{rem3.2}
Let $f(\cdot,u)\in L^1(E;m)$. By Lemma \ref{lm2.n1} and Proposition \ref{prop.equiv}, if (\ref{eq1.8}) is satisfied, then  $u$ is a solution to (\ref{eq1.1})  if and only if
\[
u(x)=\int_Ef(y,u(y)) r(x,y)\,m(dy)+\int_E r(x,y)\,\mu(dy)
\]
for $m$-a.e. $x\in E$.
\end{remark}

\begin{remark}
In \cite{KR:NoD} the following definition of a solution of (\ref{eq1.1}) is introduced: $u:E\rightarrow\BR$ is a renormalized solution of (\ref{eq1.1}) if
\begin{enumerate}
\item[(a)] $f(\cdot,u)\in L^1(E;m)$
and $T_k(u)\in D_e(\EE)$ for every $k>0$,
\item[(b)] there exists a sequence
$\{\nu_k\}\subset\MM_{0,b}(E)$ such that
$\|\nu_{k}\|_{TV}\rightarrow0$ as $k\rightarrow\infty$ and for
every $k\in\BN$ and every bounded $v\in D_e(\EE)$,
\[
\EE(T_k(u),v) =\langle f(\cdot,u)\cdot m+\mu,\tilde v\rangle+
\langle\nu_{k},\tilde v\rangle.
\]
\end{enumerate}
Note that in case of local operators, this is essentially
\cite[Definition 2.29]{DMOP}. By  \cite[Proposition 5.3]{KR:JFA}
%(see also \cite[Proposition 3.9]{KR:CM} for the case of nonsymmetric forms)
and \cite[Theorem 3.5]{KR:NoD}, $u$ is a solution of (\ref{eq1.1})
in the sense of Definition \ref{def3.1}  if and only if it is a
renormalized solution.
\end{remark}

\begin{lemma}
\label{lem3.1}
\begin{enumerate}
\item[\rm(i)]
Let $u$ be a solution of \mbox{\rm(\ref{eq1.1})} with $f$ satisfying \mbox{\rm(\ref{eq1.2})}. Then for every $a>0$,
\[
\int_{\{|u|> a\}} |f(x,u(x)) |\,m(dx)\le\|\fch_{\{|\tilde u|>a\}}
\cdot\mu\|_{TV}.
\]
\item[\rm(ii)]Assume that $f$ satisfies \mbox{\rm(\ref{eq1.6})}.  If $u_i$,
$i=1,2$, is a solution of \mbox{\rm(\ref{eq1.1})} with  $\mu$
replaced by $\mu_i\in\MM_{0,b}$, then
\[
\|f(\cdot,u_1)-f(\cdot,u_2)\|_{L^1(E;m)} \le\|\mu_1-\mu_2\|_{TV}.
\]
\end{enumerate}
\end{lemma}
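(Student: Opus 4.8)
The plan is to test the linearized equations provided by Proposition \ref{prop.equiv} with a regularized sign function, to use Lemma \ref{lem2.2} to fix the sign of the resulting quadratic term, and to pass to the limit; the sign structure coming from (\ref{eq1.2}) is recovered only after the limit. For part (i), write $g=f(\cdot,u)$ and fix $a\ge0$. For $\delta>0$ let $H_{a,\delta}:\BR\to\BR$ be the odd, continuous, piecewise-linear function with $H_{a,\delta}(t)=0$ on $[0,a]$, $H_{a,\delta}(t)=(t-a)/\delta$ on $[a,a+\delta]$ and $H_{a,\delta}(t)=1$ on $[a+\delta,\infty)$. Then $H_{a,\delta}$ is increasing, vanishes at $0$, satisfies $|H_{a,\delta}|\le1$ and $H_{a,\delta}(t)\to\mbox{sign}(t)\fch_{\{|t|>a\}}$ as $\delta\downarrow0$, while $\delta H_{a,\delta}$ is increasing, vanishes at $0$ and is $1$-Lipschitz. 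Choose a generalized nest $\{F_n\}$ for $g\cdot m+\mu$ and let $u_n\in D_e(\EE)$ solve (\ref{star1}); by Proposition \ref{prop.equiv}, $u_n\to u$ $m$-a.e.\ and $\tilde u_n\to\tilde u$ q.e. Testing (\ref{star1}) with $\eta=H_{a,\delta}(u_n)\in D_e(\EE)$ gives
\[
\EE(u_n,H_{a,\delta}(u_n))=\int_E H_{a,\delta}(\tilde u_n)\fch_{F_n}\,g\,dm+\int_E H_{a,\delta}(\tilde u_n)\fch_{F_n}\,d\mu,
\]
and applying Lemma \ref{lem2.2} to the $1$-Lipschitz function $\delta H_{a,\delta}$ (then dividing by $\delta$) shows that the left-hand side is nonnegative.

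Next I would let $n\to\infty$. Since $\fch_{F_n}g\to g$ $m$-a.e.\ and $\fch_{F_n}\to1$ $|\mu|$-a.e.\ (by (\ref{eq2.8})), while $H_{a,\delta}(\tilde u_n)\to H_{a,\delta}(\tilde u)$ both $m$-a.e.\ and $|\mu|$-a.e.\ (the latter because $\mu$ is diffuse, so q.e.\ convergence is $|\mu|$-a.e.\ convergence), dominated convergence ($|g|\in L^1(E;m)$, $|\mu|\in\MM_b$, $|H_{a,\delta}|\le1$) yields
\[
0\le\int_E H_{a,\delta}(\tilde u)\,g\,dm+\int_E H_{a,\delta}(\tilde u)\,d\mu.
\]
Letting $\delta\downarrow0$ and applying dominated convergence once more gives
\[
-\int_E\mbox{sign}(\tilde u)\fch_{\{|\tilde u|>a\}}\,g\,dm\le\int_E\mbox{sign}(\tilde u)\fch_{\{|\tilde u|>a\}}\,d\mu.
\]
By (\ref{eq1.2}) one has $\mbox{sign}(u)g=-|g|$ on $\{|u|>a\}$, so the left-hand side equals $\int_{\{|u|>a\}}|g|\,dm$, while the right-hand side is at most $|\mu|(\{|\tilde u|>a\})=\|\fch_{\{|\tilde u|>a\}}\cdot\mu\|_{TV}$. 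Using the quasi-continuous version $\tilde u$ in the measure term is essential, since $\mu$ charges no set of capacity zero.

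For part (ii), set $w=u_1-u_2$ and $h=f(\cdot,u_1)-f(\cdot,u_2)$. Then $w=G(h)+G(\mu_1-\mu_2)$, so $w$ is a solution of the linear problem with data $h\cdot m+(\mu_1-\mu_2)$, and Proposition \ref{prop.equiv} again supplies an approximating sequence $w_n\in D_e(\EE)$. The monotonicity assumption (\ref{eq1.6}) gives $h\,w\le0$ $m$-a.e., i.e.\ a sign condition for the pair $(h,w)$. Repeating the argument of part (i) verbatim with $u,g,\mu$ replaced by $w,h,\mu_1-\mu_2$ and with $a=0$ yields
\[
-\int_E\mbox{sign}(\tilde w)\,h\,dm\le\int_E\mbox{sign}(\tilde w)\,d(\mu_1-\mu_2)\le\|\mu_1-\mu_2\|_{TV}.
\]
On $\{w=0\}$ we have $u_1=u_2$, hence $h=0$ there, so the left-hand side equals $\int_{\{w\ne0\}}|h|\,dm=\|f(\cdot,u_1)-f(\cdot,u_2)\|_{L^1(E;m)}$, which is the claim.

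The main obstacle is the limit passage. One cannot test with $u$ directly, because $u\notin D_e(\EE)$ in general; instead one must test the genuine weak equations (\ref{star1}) satisfied by the approximations $u_n\in D_e(\EE)$, and only restore the sign structure after $n\to\infty$, since (\ref{eq1.2}) links $g=f(\cdot,u)$ to $u$ and not to $u_n$. Keeping careful track of which convergences hold $m$-a.e.\ versus q.e.\ (and hence $|\mu|$-a.e.) is what makes the two interchanges of limit and integral legitimate.
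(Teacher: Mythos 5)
Your proof is correct and follows essentially the same route as the paper: test the approximate equations (\ref{star1}) supplied by Proposition \ref{prop.equiv} with a bounded, odd, increasing function vanishing on $[-a,a]$ (your piecewise-linear $H_{a,\delta}$ plays exactly the role of the paper's $\psi_{a,k}(y)=\frac{k(y-a)^+}{1+k(y-a)^+}-\frac{k(y+a)^-}{1+k(y+a)^-}$), use Lemma \ref{lem2.2} to conclude $\EE(u_n,\eta)\ge 0$, pass to the limit first in $n$ and then in the regularization parameter, and only afterwards invoke the sign condition. Your part (ii), reducing to a sign-condition problem for $u_1-u_2$ and applying part (i) with $a=0$, is likewise the paper's argument.
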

\begin{proof}
Let $\{F_n\}$ be a generalized nest such that
$\fch_{F_n}(|f(\cdot,u)|\cdot m+|\mu|)\in\MM_{0,b}\cap V'$. For
$n\ge1$ we set $f_n=\fch_{F_n}f(\cdot,u)$,
$\mu_n=\fch_{F_n}\cdot\mu$ and  $u_n=G(f_n\cdot m+\mu_n)$. Then
$u_n\in D_e(E)$.
For $a>0$, $k\in\BN$ we set
\[
\psi_{a,k}(y)=\frac{k(y-a)^+}{1+k(y-a)^+}-\frac{k(y+a)^-}{1+k(y+a)^-}\,,\quad y\in\BR.
\]
Since $\psi:=(1/k)\psi_{a,k}$ satisfies the assumptions of Lemma \ref{lem2.2},
$\psi_{a,k}(u_n)\in D_e(\EE)$ and
\[
\EE(u_n,\psi_{a,k}(u_n))\ge0
\]
Let $\tilde u_n$ be a quasi-continuous $m$-version of $u_n$. Then $\psi_{a,k}(\tilde u_n)$ is a quasi-continuous $m$-version of $\psi_{a,k}(u_n)$.
By Proposition \ref{prop.equiv},
\[
\EE(u_n,\psi_{a,k}(u_n))=\langle f_n\cdot m,\psi_{a,k}(u_n)\rangle +\langle\mu_n,\psi_{a,k}(\tilde u_n)\rangle.
\]
Hence
\begin{equation}
\label{eq3.37}
-\int_E f_n(x)\psi_{a,k}(u_n(x))\,m(dx)
\le \int_E\psi_{a,k}(\tilde u_n(x))\,\mu_n(dx) \\
\le \int_{\{|\tilde u_n|>a\}}\,|\mu|(dx).
\end{equation}
By Proposition \ref{prop.equiv}(v), $\tilde u_n\rightarrow\tilde u$ q.e. Therefore letting
$n\rightarrow\infty$ in (\ref{eq3.37}) and using the dominated convergence theorem we obtain
\[
-\int_E f(x, u(x))\psi_{a,k}(u(x))\,m(dx)\le\int_{\{|\tilde u|>a\}}\,|\mu|(dx).
\]
By (\ref{eq1.2}) and the definition of $\psi_{a,k}$, $|f(\cdot,u)\psi_{a,k}(u)|=-f(\cdot,u)\psi_{a,k}(u)$. Hence
\[
\int_E |f(x,u(x))||\psi_{a,k}(u(x))|\,m(dx)\le  \int_{\{|\tilde u|>a\}}\,|\mu|(dx).
\]
Letting $k\rightarrow\infty$ in the above inequality  yields part (i) of the lemma. To get (ii), we observe that $v= u_1-u_2$ is a solution to the problem
\[
-Lv=g(\cdot,v)+\mu_1-\mu_2
\]
with $g(x,y)=f(x,y+u_2(x))-f(x,u_2(x))$. Since $f$ satisfies
(\ref{eq1.6}), $g$ satisfies (\ref{eq1.2}). Therefore  the desired
inequality follows from part (i).
\end{proof}

%For an analytical proof of Lemma \ref{lem3.1}(i) in the case that $L=\Delta$ %see \cite[Lemma 19.2]{Po}.
Note that from Lemma \ref{lem3.1}(i) with $a=0$ the following absorption estimate follows:
\begin{equation}
\label{eq3.33}
\|f(\cdot,u)\|_{L^1(E;m)}\le \|\mu\|_{TV}.
\end{equation}

\begin{corollary}
If $f$ satisfies \mbox{\rm(\ref{eq1.6})}, then there exists at
most one solution to \mbox{\rm(\ref{eq1.1})}.
\end{corollary}
\begin{proof}
Let $u_1$, $u_2$ be  solutions of \mbox{\rm(\ref{eq1.1})}, and let
$v=u_1-u_2$. By Lemma \ref{lem3.1}(ii), $v$ is a solution of the
problem $-Lv=0$. Hence $v=0$ $m$-a.e. by Proposition
\ref{prop.equiv}(ii).
\end{proof}

%To prove the uniqueness result without assumption (\ref{eq2.2}) it is enough to apply a version of the maximum principle proved in  \cite{O}.

%\begin{proposition}
%If $f$ satisfies \mbox{\rm(\ref{eq1.6})}, then there exists at
%most one solution to \mbox{\rm(\ref{eq1.1})}.
%\end{proposition}
%\begin{proof}
%Let $u_1$, $u_2$ be two solutions of \mbox{\rm(\ref{eq1.1})}. Then $u_1-u_2=G(f(\cdot,u_1)-f(\cdot,u_2))$. By the maximum principle (see \cite[Corollary 1.3.6]{O}) and (\ref{eq1.6}),
%\[
%\sup (u_1-u_2)=\sup_{\{f(\cdot,u_1)>f(\cdot,u_2)\}} G(f(\cdot,u_1)-f(\cdot,u_2))\le \sup_{\{u_1\le u_2\}} (u_1-u_2)\le 0.
%\]
%Since the same reasoning  applies to $u_2-u_1$, we get $u_1=u_2$.
%\end{proof}

\begin{proposition}
\label{prop3.3} If $u$ is a solution of \mbox{\rm(\ref{eq1.1})} with $f$ satisfying \mbox{\rm(\ref{eq1.2})}, then for every $k\ge0$, $T_k(u)\in D_e(\EE)$ and
\begin{equation}
\label{eq3.35}
\EE(T_k(u),T_k(u))\le 2k\|\mu\|_{TV}.
\end{equation}
\end{proposition}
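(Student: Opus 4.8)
The plan is to mimic the approximation argument of Lemma~\ref{lem2.0}, testing the approximate equation with the truncation of the approximant. Let $\{F_n\}$ be a generalized nest with $\mathbf{1}_{F_n}(|f(\cdot,u)|\cdot m+|\mu|)\in\MM_{0,b}\cap V'$, and put $f_n=\mathbf{1}_{F_n}f(\cdot,u)$, $\mu_n=\mathbf{1}_{F_n}\cdot\mu$, and $u_n=G(f_n\cdot m+\mu_n)\in D_e(\EE)$. By \eqref{eq2.16xy} and \eqref{eq2.16}, $u_n$ solves \eqref{star1}. Choosing the admissible test function $\eta=T_k(u_n)\in D_e(\EE)$ and combining with \eqref{eq2.22} gives
\[
\EE(T_k(u_n),T_k(u_n))\le\EE(u_n,T_k(u_n))
=\langle f_n\cdot m,T_k(u_n)\rangle+\langle\mu_n,T_k(\tilde u_n)\rangle.
\]

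The crux is to bound the two terms on the right uniformly in $n$. Since $|T_k(\tilde u_n)|\le k$, the measure term satisfies $\langle\mu_n,T_k(\tilde u_n)\rangle\le k\|\mu_n\|_{TV}\le k\|\mu\|_{TV}$. For the absorption term I would \emph{not} attempt to use the sign condition \eqref{eq1.2} to discard it: the truncation here is applied to $u_n$, whose sign need not agree with that of $u$ along the approximation. Instead I would bound it crudely, $\langle f_n\cdot m,T_k(u_n)\rangle=\int_E f_n T_k(u_n)\,dm\le k\|f_n\|_{L^1(E;m)}\le k\|f(\cdot,u)\|_{L^1(E;m)}$, and invoke the absorption estimate \eqref{eq3.33} to get $k\|\mu\|_{TV}$. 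Adding the two contributions gives the factor $2$, namely $\EE(T_k(u_n),T_k(u_n))\le 2k\|\mu\|_{TV}$ for every $n$.

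It then remains to pass to the limit, which proceeds exactly as in Lemma~\ref{lem2.0}. The uniform bound makes $\{T_k(u_n)\}$ bounded, hence weakly relatively compact, in $V$. Taking a weakly convergent subsequence, applying the Banach--Saks theorem, the transience inequality \eqref{eq2.5}, and the convergence $u_n\to u$ $m$-a.e. from Proposition~\ref{prop.equiv}(iii), one identifies the weak limit as $T_k(u)$; in particular $T_k(u)\in D_e(\EE)$. Weak lower semicontinuity of $\EE(\cdot,\cdot)^{1/2}$ then yields $\EE(T_k(u),T_k(u))\le\liminf_n\EE(T_k(u_n),T_k(u_n))\le 2k\|\mu\|_{TV}$, which is \eqref{eq3.35}.

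The main obstacle is precisely the absorption term $\langle f_n\cdot m,T_k(u_n)\rangle$. The naive hope of killing it via the sign condition fails because of the mismatch between the truncation level and the approximation $u_n$; the right move is to observe that the already established $L^1$ bound \eqref{eq3.33} controls it, at the price of the constant $2$ appearing in \eqref{eq3.35}.
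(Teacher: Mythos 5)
Your proof is correct and is essentially the paper's argument: the paper simply notes that $u=G(f(\cdot,u)\cdot m+\mu)$ by Proposition \ref{prop.equiv}(ii) and applies Lemma \ref{lem2.0} to the measure $\nu=f(\cdot,u)\cdot m+\mu$, getting $\EE(T_k(u),T_k(u))\le k(\|f(\cdot,u)\|_{L^1(E;m)}+\|\mu\|_{TV})$, and then invokes (\ref{eq3.33}). What you have written is that lemma's proof unpacked for this particular measure (with the $L^1$ bound on the absorption term playing the role of $\|\nu\|_{TV}$), so the two arguments coincide in substance.
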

\begin{proof}
By Proposition \ref{prop.equiv}(ii) and Lemma \ref{lem2.0}, $T_k(u)\in D_e(\EE)$ and
\[
\EE(T_k(u),T_k(u))\le k(\|f(\cdot,u)\|_{L^1(E;m)}+\|\mu\|_{TV}),
\]
which when combined with (\ref{eq3.33}) yields (\ref{eq3.35}).
\end{proof}

\begin{lemma}
\label{lem2.5} Let $\mu\in\MM_b$. If $|\mu|$ charges no set of capacity $\mbox{\rm Cap}_{(0)}$ zero, then for every $\varepsilon>0$  there exists $\delta>0$ such that for any Borel subset $B$ of $E$, if $\mbox{\rm Cap}_{(0)}(B)\le\delta$, then $|\mu|(B)\le\varepsilon$.
\end{lemma}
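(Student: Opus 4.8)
The plan is to argue by contradiction, exploiting the countable subadditivity of the capacity $\mbox{\rm Cap}_{(0)}$ together with the hypothesis that $|\mu|$ charges no exceptional set. Suppose the conclusion fails. Then there is $\varepsilon_0>0$ such that for every $n\ge1$ one can find a Borel set $B_n\subset E$ with $\mbox{\rm Cap}_{(0)}(B_n)\le 2^{-n}$ and yet $|\mu|(B_n)>\varepsilon_0$ (the bound $2^{-n}$ on the capacities comes from taking $\delta=2^{-n}$ in the negated statement). Set
\[
A=\limsup_n B_n=\bigcap_{n\ge1}\bigcup_{k\ge n}B_k.
\]

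First I would bound the capacity of $A$. By the $0$-order version of the countable subadditivity of the capacity (the analogue of \cite[Lemma 2.1.2]{FOT}), for every $n$ we have
\[
\mbox{\rm Cap}_{(0)}(A)\le \mbox{\rm Cap}_{(0)}\Big(\bigcup_{k\ge n}B_k\Big)
\le\sum_{k\ge n}\mbox{\rm Cap}_{(0)}(B_k)\le\sum_{k\ge n}2^{-k}=2^{-n+1}.
\]
Letting $n\rightarrow\infty$ gives $\mbox{\rm Cap}_{(0)}(A)=0$, so $A$ is exceptional. Since $|\mu|$ charges no set of capacity zero, it follows that $|\mu|(A)=0$.

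On the other hand, because $|\mu|$ is a finite measure, the reverse Fatou lemma applied to the indicator functions $\mathbf{1}_{B_n}$ (dominated by the $|\mu|$-integrable constant $1$, as $|\mu|(E)<\infty$) yields
\[
|\mu|(A)=|\mu|\big(\limsup_n B_n\big)\ge\limsup_n|\mu|(B_n)\ge\varepsilon_0>0,
\]
which contradicts $|\mu|(A)=0$. Hence no such $\varepsilon_0$ exists and the claim follows.

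The only point requiring care, and the step I expect to be the crux, is the countable subadditivity of $\mbox{\rm Cap}_{(0)}$: it is precisely this property that forces the $\limsup$ of a sequence of sets with summable capacities to be exceptional, after which the hypothesis on $|\mu|$ applies. Everything else is a routine combination of this fact with the reverse Fatou lemma for the finite measure $|\mu|$.
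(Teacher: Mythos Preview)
Your proof is correct and follows essentially the same approach as the paper: both identify the countable subadditivity of $\mbox{\rm Cap}_{(0)}$ as the crucial ingredient, after which the conclusion is a standard $\varepsilon$--$\delta$ absolute continuity argument. The paper simply cites this last step as \cite[Proposition 14.7]{Po}, whereas you have written out the underlying Borel--Cantelli/reverse Fatou argument explicitly, making your version self-contained.
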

\begin{proof}
By the 0-order version of \cite[Lemma 2.1.2]{FOT} (see the remarks following \cite[(2.1.14)]{FOT}) and  \cite[Theorem A.1.2]{FOT}, $\mbox{Cap}_{(0)}$ is a countably subadditive set function. Therefore the desired result follows from \cite[Proposition 14.7]{Po}.
\end{proof}

\begin{theorem}
\label{th3.5} Assume  \mbox{\rm(\ref{eq1.7})}.  If $f$ satisfies \mbox{\rm(\ref{eq1.2})} and \mbox{\rm(\ref{eq1.5})}, then there exists a solution of \mbox{\rm(\ref{eq1.1})}. Moreover, for every $k\ge0$, $T_k(u)\in D_e(\EE)$ and \mbox{\rm(\ref{eq3.35})} is satisfied.
\end{theorem}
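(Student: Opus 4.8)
The plan is to construct a solution by two nested approximations and then read off the ``Moreover'' part for free: once any solution $u$ has been produced, Proposition \ref{prop3.3} immediately gives $T_k(u)\in D_e(\EE)$ together with (\ref{eq3.35}), so the whole content of the theorem is the existence statement. The outer approximation replaces the general diffuse measure $\mu$ by measures of finite $0$-order energy; the inner one replaces $f$ by bounded truncations. Throughout, the engine is a pair of a priori bounds — an energy bound from the sign condition (\ref{eq1.2}) and an $L^1$ absorption bound — coupled with the compactness furnished by (\ref{eq1.7}).

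First I treat the case $\mu\in\MM_{0,b}\cap V'$. Fix Borel sets $K_n\uparrow E$ with $m(K_n)<\infty$ and put $f_n(x,y)=T_n(f(x,y))\mathbf{1}_{K_n}(x)$, so that each $f_n$ is Carath\'eodory, bounded with finite-measure support, still satisfies (\ref{eq1.2}), and $f_n\to f$ pointwise. For each $n$ the regularized problem $-Lu_n=f_n(\cdot,u_n)+\mu$ has a solution $u_n\in D_e(\EE)$; solvability for such regular data is the technical point flagged below. Testing with $T_M(u_n)$, using that $f_n(x,y)T_M(y)\le0$ by the sign condition, that $\EE(T_M u_n,T_M u_n)\le\EE(u_n,T_M u_n)$ by (\ref{eq2.22}), and letting $M\to\infty$ gives the uniform bound $\EE(u_n,u_n)\le\|\mu\|_{V'}^2$; testing with the truncations $\psi_{a,k}(u_n)$ exactly as in the proof of Lemma \ref{lem3.1} gives $\|f_n(\cdot,u_n)\|_{L^1(E;m)}\le\|\mu\|_{TV}$. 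By (\ref{eq1.7}) a subsequence of $u_n$ converges $m$-a.e.\ to some $u$, and $f_n(\cdot,u_n)\to f(\cdot,u)$ in $L^1(E;m)$ by Vitali's theorem: on $\{|u_n|\le a\}$ one has $|f_n(\cdot,u_n)|\le F_a\in L^1$ by (\ref{eq1.5}), while the ``large values'' part is controlled by Lemma \ref{lem3.1}(i), the capacitary Chebyshev inequality $\mbox{Cap}_{(0)}(|\tilde u_n|>a)\le a^{-2}\EE(T_au_n,T_au_n)$, and Lemma \ref{lem2.5}. Passing to the limit in $u_n=G(f_n(\cdot,u_n))+G\mu$, where $G$ is bounded from $L^1(E;m)$ into $L^1(E;g\cdot m)$ by Lemma \ref{lm2.n2} applied with the function $g$ provided by the lemma furnishing a strictly positive $g$ with $\|Gg\|_\infty<\infty$, yields $u=G(f(\cdot,u))+G\mu$, a solution by Proposition \ref{prop.equiv}(ii).

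For general $\mu\in\MM_{0,b}$ I choose a generalized nest $\{F_j\}$ with $\mu_j:=\mathbf{1}_{F_j}\cdot\mu\in\MM_{0,b}\cap V'$, so $\|\mu_j\|_{TV}\le\|\mu\|_{TV}$ and $\mu_j\to\mu$, and take the solutions $u_j$ supplied by the previous step. Now the full functions $u_j$ need not be energy bounded, but Proposition \ref{prop3.3} gives the uniform truncation bound $\EE(T_ku_j,T_ku_j)\le2k\|\mu\|_{TV}$ for every $k$. Applying (\ref{eq1.7}) to $\{T_ku_j\}_j$ for each $k$ and diagonalizing, together with the uniform $L^1(E;g\cdot m)$ bound on $u_j=G(f(\cdot,u_j))+G\mu_j$ coming from (\ref{eq3.33}) and Lemma \ref{lm2.n2} (which rules out escape of mass to infinity), gives $u_j\to u$ $m$-a.e. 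The same equi-integrability argument, now relying on (\ref{eq3.35}) in place of the energy bound, gives $f(\cdot,u_j)\to f(\cdot,u)$ in $L^1(E;m)$; moreover $G\mu_j\to G\mu$ $m$-a.e. directly from (\ref{eq2.14}) applied to $\mu^+$ and $\mu^-$, and $G(f(\cdot,u_j))\to G(f(\cdot,u))$ in $L^1(E;g\cdot m)$ by boundedness of $G$. Hence $u=G(f(\cdot,u))+G\mu$ $m$-a.e., and $u$ is a solution by Proposition \ref{prop.equiv}(ii).

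The main obstacle is the passage to the limit in the absorption term, i.e.\ proving $f(\cdot,u_j)\to f(\cdot,u)$ in $L^1(E;m)$; this is where every hypothesis is consumed. Compactness from (\ref{eq1.7}), accessed through the uniform truncation estimate (\ref{eq3.35}), yields the $m$-a.e.\ convergence, while equi-integrability splits into a ``near the origin'' part governed by (\ref{eq1.5}) and a ``large values'' part governed by Lemma \ref{lem3.1}(i) together with the uniform absolute continuity of $|\mu|$ with respect to $\mbox{Cap}_{(0)}$ from Lemma \ref{lem2.5}. A secondary difficulty, peculiar to the abstract transient setting, is that $L^2(E;m)$ does not embed into $V'$, so the variational formulation in $V$ is unavailable and the regularized problems of the first step must be set up with bounded, finite-measure-support data and solved through the duality formulation (e.g.\ via a resolvent/fixed-point argument), which is why the truncations $f_n$ are cut off to the sets $K_n$.
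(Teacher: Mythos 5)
Your overall architecture coincides with the paper's: first solve for $\mu\in\MM_{0,b}\cap V'$ with a truncated nonlinearity, then approximate a general diffuse $\mu$ by $\fch_{F_j}\cdot\mu\in\MM_{0,b}\cap V'$ and pass to the limit using Lemma \ref{lem3.1}(i), Proposition \ref{prop3.3}, the capacitary Chebyshev inequality and Lemma \ref{lem2.5}. Your Step 2 is sound, and in fact slightly streamlines the paper's limit passage by working directly with $u_j=G(f(\cdot,u_j))+G\mu_j$ in $L^1(E;g\cdot m)$ rather than with the weighted test functions $\eta g_k$.

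The genuine gap is in Step 1: the existence of a solution to the regularized problem $-Lu_n=f_n(\cdot,u_n)+\mu$ is precisely the nontrivial core of that step, and you assert it rather than prove it. There are two concrete problems. First, your truncation $f_n(x,y)=T_n(f(x,y))\fch_{K_n}(x)$ with $m(K_n)<\infty$ only puts $f_n(\cdot,u)$ in $L^1\cap L^\infty$, and, as you yourself observe, $L^2(E;m)$ need not embed into $V'$ in this abstract transient setting; so $f_n(\cdot,u)\cdot m$ need not belong to $V'$ and the variational formulation in $V$ is not available for your approximate problems. The paper avoids this by taking $f_n=\frac{ng}{1+ng}\,T_n(f)$ with a fixed positive $g\in L^2(E;m)\cap V'$, which forces $|f_n(\cdot,u)|\le n^2g$ and hence $f_n(\cdot,u)\cdot m\in V'$ for every measurable $u$. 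Second, even with a correct truncation, solvability does not follow from a generic ``resolvent/fixed-point argument'': $f_n$ is not monotone in $u$ (only the sign condition holds), so monotone operator theory fails, and a Schauder-type scheme would require compactness of the map $v\mapsto G(f_n(\cdot,v))$, which is not available under (\ref{eq1.7}) alone --- that would essentially amount to assuming (\ref{eq1.10}). The paper's actual mechanism is the surjectivity of coercive pseudomonotone operators $A_nu=-Lu-f_n(\cdot,u)$ from $V$ to $V'$, and the verification of pseudomonotonicity itself consumes hypothesis (\ref{eq1.7}): weak convergence in $V$ must be upgraded to $m$-a.e.\ convergence in order to pass to the limit in the nonlinear term. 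Until you supply an existence proof for the regularized problems at this level of detail, the argument is incomplete.
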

\begin{proof}
We divide the proof into two steps.\\
{\em Step 1.}  We first assume that $\mu^+\mu^{-}\in\MM_{0,b}\cap
V'$. For a positive  $g\in L^2(E;m)\cap V'$ set $f_n=\frac{ng}{1+ng}T_n(f)$, $n\in\BN$.
%Under (\ref{eq1.6})  the operator $A_n:V\rightarrow V'$ defined
%as $A_n(u)=-Lu-f_n(u)$ is monotone, coercive and hemicontinuous.
%\\
%K: \\
%(a) (bounded, tzn $A_n$ przeprowadza zbiory ograniczone w $V$ w
%zbiory ograniczone w $V'$) Dla $u\in V$ mamy (ponizej
%$(\cdot,\cdot):=\langle\langle\cdot,\cdot\rangle\rangle$)
%\begin{align*}
%\|A_n(u)\|_{V'}=\sup_{\|v\|_{V}\le 1}
%|(A_n(u),v)|&=\sup_{\|v\|_{V}\le1}|\EE(u,v)-(f_n(u),v)_{L^2}|\\
%&\le\sup_{\|v\|_{V}\le 1} %\EE(u,u)^{1/2}\EE(v,v)^{1/2}+\|f_n(u)\|_{L^2}\|v\|_{L^2}\\
%&\le\sup_{\|v\|_{V}\le 1}\|u\|_{V}\|v\|_V +\|f_n(u)\|_{L^2}\|v\|_V\le %C+\|u\|_V.
%\end{align*}
%(b) (hemicontinuous) Dla $\lambda\in\BR$ oraz $u,v,w\in V$ mamy
%\[
%(A_n(u+\lambda v),w)=\EE(u+\lambda v,w)-(f_n(u+\lambda v),w)_{L^2} =
%\EE(u,w)+\lambda\EE(v,w)-(f_n(u+\lambda v),w)_{L^2}.
%\]
%Oczywi/scie $\lambda\mapsto\lambda\EE(v,w)$ jest ciagle. Ponadto
%$\lambda\mapsto(f_n(u+\lambda v),w)_{L^2}$ jest ciagle na mocy
%\cite[s. 195]{L} (tw. Krasonosielskiego).\\
%(c) (monotone) Mamy
%\[
%(A_n(u)-A_n(v),u-v)=\EE(u-v,u-v)-(f_n(u)-f_n(v),u-v)_L^2
%\ge\EE(u-v,u-v)\ge0.
%\]
%(d) (coercive) Mamy
%\[
%\frac{(A(u),u)}{\|u\|_V}=\frac{\EE(u,u)-( f_n(u),u)_{L^2}}{\|u\|_V}
%\ge\frac{\EE(u,u)}{\|u\|_V}=\|u\|_V\rightarrow\infty
%\]
%gdy $\|u\|_V\rightarrow\infty$.\\
%Uwaga. W (a) skorzystalismy z faktu, ze $\|v\|_{L^2}\le c\|v\|_V$,
%a wiec z nierownosci Poincare.\\
%KK
Under the hypothesis (\ref{eq1.5}) the operator $A_n:V\rightarrow V'$ defined as $A_n(u)=-Lu-f_n(u)$
is pseudomonotone (see, e.g., \cite[Section 2.1]{R} for the definition).
Indeed, it is clear that $A_n$ maps bounded sets of $V$ into bounded sets of $V'$. %p. komentarze wyzej
Next, suppose that $u_k\rightarrow u$ weakly in $V$. Then for any $v\in V$,
\begin{align*}
\liminf_{k\rightarrow\infty}\langle\langle-Lu_k,u_k-v\rangle\rangle
&=\liminf_{k\rightarrow\infty}\EE(u_k,u_k-v)
=\liminf_{k\rightarrow\infty}\EE(u_k,u_k)-\EE(u_k,v)\\
&\ge\EE(u,u)-\EE(u,v)=\langle\langle-Lu,u-v\rangle\rangle.
\end{align*}
%Inaczej, $L$ is pseudomonotone, poniewaz jest hemicontinuous and monotone %(p. Roubicek, Lemma 2.9).
Furthermore, by (\ref{eq1.7}), we can assume that $u_k\rightarrow u$ $m$-a.e.
%K\\
%Poniewaz ciag $\{\|u_k\|_V\}$ jest ograniczony (p. Musielak, Tw. 21.5), to %przy zalozeniu (\ref{eq1.7}) jest relatywnie zwarty w $L^2(E;m)$, a poniewaz %zbiega slabo, to w istocie $u_k\rightarrow u$ in $L^2(E;m)$.\\
%KK\\
Consequently, we can assume that $f_n(\cdot,u_k)\rightarrow f_n(\cdot,u)$ in $V'$.
%(tw. Krasnosielskiego, albo wprost: $f_k(u_k)\rightarrow f_n(u)$ $m$-a.e., %wiec na mocy tw. Lebesgue'a  w $L^2(E;m)$, bo wspolnie ograniczone i %$m(E)<\infty$).
Therefore, for  any $v\in V$,
\begin{align*}
\lim_{k\rightarrow\infty}\langle\langle-f_n(\cdot,u_k),u_k-v\rangle\rangle
&=\lim_{k\rightarrow\infty}(-f_n(\cdot,u_k),u_k-v)\\
&=\liminf_{k\rightarrow \infty}\int_E|f_n(x,u_k(x)) u_k(x)|\,m(dx)
+\lim_{k\rightarrow \infty}\langle\langle f_n(\cdot,u_k),v\rangle\rangle\\
&\ge\int_E|f_n(x,u(x)) u(x)|\,m(dx)+\langle\langle f_n(\cdot,u),v\rangle\rangle\\&=(-f_n(\cdot,u),u-v)=\langle\langle-f_n(\cdot,u),u-v\rangle\rangle.
\end{align*}
Accordingly, $A_n$ is pseudomonotone. Since by (\ref{eq1.2}), for  $u\in V$ we have $\langle\langle A_nu,u\rangle\rangle=\EE(u,u)-(f_n(\cdot,u),u)\ge\EE(u,u)$, the operator $A_n$ is also coercive. Therefore $A_n$ is surjective by standard result in the theory of pseudomonotone mappings (see, e.g., \cite[Theorem 2.6]{R}).
Thus, there exists a weak solution $u_n\in D_e(\EE)$ of the equation
%(gdy (\ref{eq1.6}) to jest jednoznaczne, bo $(A_n(u)-A_n(v),u-v)>0$ dla $u\neq v$)
\begin{equation}
\label{eq3.7}
-Lu_n=f_n(\cdot,u_n)+\mu,
\end{equation}
i.e. for any $v\in D_e(\EE)$,
\begin{equation}
\label{eq3.1}
\EE(u_n,v)=\int_Ef_n(x,u_n(x))v(x)\,m(dx)+\langle\langle\mu,v\rangle\rangle.
\end{equation}
Taking $u_n$ as a test function in (\ref{eq3.1}) we get
\begin{equation}
\label{eq3.2} \EE(u_n,u_n)-\int_Ef_n(x,u_n(x))u_n(x)\,m(dx)
=\langle\langle\mu,u_n\rangle\rangle\\
\le\|\mu\|_{V'}\EE(u_n,u_n)^{1/2}.
\end{equation}
By (\ref{eq1.2}) and (\ref{eq3.2}),
\begin{equation}
\label{eq3.3} \EE(u_n,u_n)+\int_E|f_n(x,u_n(x))u_n(x)|\,m(dx)
\le\|\mu\|^2_{V'},\quad n\ge 1.
\end{equation}
By (\ref{eq1.7}) and
(\ref{eq3.3}) there is $u\in D_e(\EE)$ and a subsequence (still
denoted by $n$) such that $u_n\rightarrow u$ $m$-a.e. and weakly in $V$.
%Taking a further subsequence if necessary, we may and will assume that $u_n\rightarrow u$ $m$-a.e.
Then, by the definition of $f_n$, $f_n(\cdot,u_n)\rightarrow
f(\cdot,u)$ $m$-a.e. By (\ref{eq3.3}),  for any Borel subset $B$ of $E$  and $a>0$ we have
\begin{align*}
\int_B|f(x,u_n(x))|\,m(dx)&\le a^{-1}\int_{B\cap\{|u_n|>a\}}|f(x,u_n(x))u_n(x)|\,m(dx)\\
&\quad+\int_{B\cap\{|u_n|\le a\}}|f(x,u_n(x))|\,m(dx)\\
&\le a^{-1}\|\mu\|^2_{V'}+\int_B F_a(x)\,m(dx).
\end{align*}
From the above inequality and  (\ref{eq1.5}) we conclude that the sequence $\{f_n(\cdot,u_n)\}$ is equi-integrable and tight.
%(see the proof of \cite[Theorem B.3]{BMP}).
%\\
%K:\\
%Korzystamy z dwoch faktow. Po pierwsze, ze dla dowolnego zbioru
%borelowskiego $B\subset E$
%\[
%\int_{B\cap\{|u_n|>M\}}|f_n(u_n)|\,m(dx)\le-\frac{1}{M}\int_{B\cap\{|u_n|>M\}}
%f_n(u_n)u_n\,m(dx)\le\frac{C_1}{M}
%\]
%(powyzej korzystamy z faktu, ze $f(u_n)u_n\le0$; jest tak, gdy
%$f(0)=0$). Po drugie,
%\[
%\int_{B\cap\{|u_n|\le M\}}\!|f_n(u_n)|\,m(dx)
%\le\int_{B\cap\{|u_n|\le M\}}\!|f(u_n)|\,m(dx)
%\le\max\{f(-M),-f(M)\}m(B).
%\]
%W przypadku, gdy $f_n$ zdefiniowane jak w \cite[Theorem B.3]{BMP}
%(przez obciecie) rozumujemy identycznie, bo nadal
%$|f_n(u_n)|\le|f(u_n)|$. W przypadku warunku znaku mamy
%\[
%\int_{B\cap\{|u_n|\le M\}}\!|f(u_n)|\,m(dx)\le C_2m(B),
%\]
%where $C_2=\sup_{|y|\le M}|f(y)|$. Dla $\varepsilon>0$ wybierzmy $M$ tak, ze
%..
% Zauwazmy, ze powyzsze rozumowanie
%pokazuje rowniez, ze $\{f_n(u_n)\}$ jest ograniczony w $L^1(E;m)$
%gdy $m(E)<\infty$ (rozpatrujemy $B=E$ ). Dlatego mozemy stosowac
%kryterium zwartosci Dunforda-Pettisa ([AFP, s. 18]).
%\\
%KK\\
Hence  $f_n(\cdot,u_n)\rightarrow f(\cdot,u)$ in $L^1(E;m)$ by Vitali's
convergence theorem (see, e.g., \cite[Theorem 2.24]{FL}).
%\\
%K:\\
%Z faktu, ze $\{f_n(u_n)\}$ jest equi-integrable i tw.
%Dunforda-Pettisa ([AFP, s. 18]) wynika, ze $\{f_n(u_n)\}$ jest
%relatively weakly compact. Stad i z faktu, ze $f_n(u_n)\rightarrow
%f(u)$ $m$-a.e. wynika, ze $f_n(u_n)\rightarrow f(u)$ slabo w
%$L^1(E;m)$. W szczegolnosci $f(u)\in L^1(E;m)$. Dlatego
%$\{|f_n(u_n)-f(u)|\}$ jest equi-intagrable. Poniewaz
%$|f_n(u_n)-f(u)|\rightarrow0$ $m$-a.e. to wynika stad, ze
%$f_n(u_n)-f(u)\rightarrow0$ w $L^1(E;m)$.
%\\
%KK\\
Therefore letting $n\rightarrow\infty$ in (\ref{eq3.1}) we see
that
\begin{equation}
\label{eq3.4}
\EE(u,v)=\int_Ef(x,u(x))v(x)\,m(dx)+\langle\langle\mu,v\rangle\rangle
\end{equation}
for any bounded $v\in D_e(\EE)$. Let $\eta\in L^1(E;m)$ be such that
$\|G|\eta|\|_\infty<\infty$, and let $\{F_n\}$
be a generalized nest such that $\eta_n=\mathbf{1}_{F_n}\eta\in V'$. Then $G\eta_n$ is bounded and $G\eta_n\in V$. Therefore  taking $v=G\eta_n$ as a test function in (\ref{eq3.4}) we get
\[
\int_E u\eta_n\,dm=\int_E f(x,u(x))G\eta_n(x)\,m(dx)
+\int_E \widetilde{G\eta_n}(x)\,\mu(dx).
\]
By Lemma \ref{lem2.1},
$\widetilde{G\eta_n}\rightarrow\widetilde{G\eta}$ as
$n\rightarrow\infty$. Therefore letting $n\rightarrow \infty$ in
the above equation we obtain (\ref{eq2.3}). Thus $u$ is a solution
of
(\ref{eq1.1}).\\
{\em Step 2.} We now show how to dispense with the assumption that
$\mu^{+}$, $\mu^{-}\in\MM_{0,b}\cap V'$. By the 0-order version of
\cite[Theorem 2.2.4]{FOT} (see the beginning  of the proof of
\cite[Theorem 2.4.2(ii)]{FOT}), there exists a generalized nest
$\{F_n\}$ such that $\mu^{(+)}_n=\fch_{F_n}\cdot\mu^+$,
$\mu^{(-)}_n=\fch_{F_n}\cdot\mu^-\in\MM_{0,b}\cap V'$.
%(w istocie siec moze byc rozna dla $\mu^+$ i $\mu^-$);
% p. tez \cite[Theorem 1.2.13(iii)]{CF}.
Set $\mu_n=\mu^{(+)}_n-\mu^{(-)}_n$.
By {\em Step 1}, there exists a  solution $u_n\in D_e(\EE)$ of the
equation
\begin{equation}
\label{eq3.6} -Lu_n=f(\cdot, u_n)+\mu_n.
\end{equation}
In particular,
\begin{equation}
\label{eq3.13}
\int_E u_n(x)\eta(x)\,m(dx)=\int_E f(x,u_n(x))G\eta(x)\,m(dx)
+\int_E \widetilde{G\eta}(x)\,\mu(dx)
\end{equation}
for any $\eta\in L^1(E;m)$ such that $G|\eta|$ is bounded. By Lemma \ref{lem3.1},
for any Borel subset $B$ of $E$ and $a>0$ we have
\begin{align}
\label{eq3.9}
\int_{B}|f(x,u_n(x))|\,m(dx)&=
\int_{B\cap\{|u_n|\le a\}}|f(x,u_n(x))|\,m(dx)\nonumber\\
&\quad+\int_{B\cap\{|u_n|>a\}}|f(x,u_n(x))|\,m(dx)\nonumber\\
&\le \int_BF_a(x)\,m(dx)+\|\fch_{\{|u_n|>a\}}\cdot\mu\|_{TV}.
\end{align}
By Proposition \ref{prop3.3},
\begin{equation}
\label{eq3.12}
\EE(T_k(u_n),T_k(u_n))\le 2k\|\mu_n\|_{TV},
\end{equation}
whereas  by the $0$-order version of \cite[(2.1.10)]{FOT} and (\ref{eq2.2}),
\[
\mbox{Cap}_{(0)}(\{|T_k(u_n)|>a\})\le a^{-2}\EE(T_k(u_n),T_k(u_n)).
\]
If $k>a$, then $\{|u_n|>a\}=\{|T_k(u_n)|>a\}$, so for any $k>a$,
\begin{align*}
\mbox{Cap}_{(0)}(\{|u_n|>a\})&=\mbox{Cap}_{(0)}(\{|T_k(u_n)|>a\}) \\
&\le a^{-2}\EE(T_k(u_n),T_k(u_n))\le 2a^{-2}k\|\mu_n\|_{TV}.
\end{align*}
Hence
\begin{equation}
\label{eq3.11} \mbox{Cap}_{(0)}(\{|u_n|>a\})\le 2a^{-1}\|\mu\|_{TV}.
\end{equation}
Let $\varepsilon>0$. As $|\mu|\in\MM_{0,b}$, by  Lemma
\ref{lem2.5} there exists $\delta>0$ such that
$|\mu|(\{|u_n|>a\})\le\varepsilon/2$ if
$\mbox{Cap}_{(0)}(\{|u_n|>a\})\le\delta$. Hence, by
(\ref{eq3.11}), $|\mu|(\{|u_n|>a\})\le\varepsilon/2$ if
$a=\delta^{-1}\|\mu\|_{TV}$. By (\ref{eq1.5}) with
$a=\delta^{-1}\|\mu\|_{TV}$, there is $\gamma>0$ such that
$\int_BF_a(x)\,m(dx)<\varepsilon/2$ if $m(B)\le\gamma$. From this and
(\ref{eq3.9}) it follows that if $m(B)\le\gamma$, then
$\int_{B}|f(x,u_n(x))|\,m(dx)\le \varepsilon$. Furthermore, by
(\ref{eq1.5}) and $\sigma$-finitness of $m$, there exists a Borel
set $E_0\subset E$ such that $m(E_0)<\infty$ and $\int_{E\setminus
E_0} F_a(x)\,m(dx)<\varepsilon/2$. Therefore taking $B=E\setminus E_0$
in (\ref{eq3.9}) we get $\int_{E\setminus
E_0}|f(x,u_n(x))|\,m(dx)\le\varepsilon$. This shows that the
sequence $\{f(\cdot,u_n)\}$ is equi-integrable and tight. On the
other hand, by (\ref{eq1.7}) and  (\ref{eq3.12}), for each $k>0$
the sequence $\{T_k(u_n)\}_n$ is, up to a subsequence, convergent
$m$-a.e., so using the diagonal argument, one can find a
subsequence, still denoted by $(n)$, such that $\{u_n\}$ converges
$m$-a.e. to some $u$. Hence $f(\cdot,u_n)\rightarrow f(\cdot,u)$
$m$-a.e. Consequently, by Vitali's theorem,
$f(\cdot,u_n)\rightarrow f(\cdot,u)$ in $L^1(E;m)$. Let $k>0$, and
let $g$ be a strictly positive function such that
$\|Gg\|_{\infty}<\infty$ and $g\in L^1(E;m)$. Taking
$\eta=g\mathbf{1}_{\{u_n\ge k\}}$ and
$\eta=-g\mathbf{1}_{\{u_n\le-k\}}$ as test functions in
(\ref{eq3.13}) we obtain
\begin{align}
\label{eq3.22}
\int_{\{|u_n|\ge k\}} |u_n(x)|g(x)\,m(dx)&
\le -\int_E |f_n(x,u_n(x))| G(g\mathbf{1}_{\{|u_n|\ge k\}})(x)\,m(dx)
\nonumber \\
&\quad+\int_E \widetilde{G(g\mathbf{1}_{\{|u_n|\ge k\}})}(x)\,\mu(dx).
\end{align}
We already know that the sequence $\{f(\cdot,u_n)\}$ is equi-integrable and tight. Furthermore, the functions
$\widetilde{G(g\mathbf{1}_{\{|u_n|\ge k\}})}$ is bounded q.e., and by Lemma \ref{lem2.1}, $\widetilde{G(g\mathbf{1}_{\{|u_n|\ge k\}})}\searrow 0$ q.e. as $k\rightarrow\infty$. Therefore from (\ref{eq3.22}) it follows that the sequence $\{u_n\}$ is equi-integrable with respect to the finite measure $\nu=g\cdot m$. Since $u_n$ is a solution of (\ref{eq3.6}),   for any  $\eta\in L^1(E;m)$ such that $\|G|\eta|\|_{\infty}<\infty$ and any $k\ge 0$ we have
\begin{align}
\label{eq2.3abc}
\int_E u_n(x)\eta(x) g_k(x)\,m(dx)&=\int_E f_n(x,u_n(x))G(\eta g_k)(x)\,m(dx)\nonumber\\
&\quad+\int_E\mathbf{1}_{F_n}(x) \widetilde{G(\eta g_k)}(x)\,\mu(dx),
\end{align}
where $g_k=\frac{kg}{1+kg}$.
By what has already been proved, letting $n\rightarrow\infty$
in (\ref{eq2.3abc}) yields
\begin{align*}
\int_E u(x)\eta(x) g_k(x)\,m(dx)&=\int_E f(x,u(x))G(\eta g_k)(x)\,m(dx)\\
&\quad+\int_E\mathbf{1}_{F}(x) \widetilde{G(\eta g_k)}(x)\,\mu(dx)
\end{align*}
with $F=\bigcup^{\infty}_{n=1}F_n$. Since $\mu(E\setminus F)=0$ by
(\ref{eq2.8}), letting $k\rightarrow\infty$ and using Lemma
\ref{lem2.1} shows that $u$ is a solution to (\ref{eq1.1}).
\end{proof}

\begin{remark}
(i) Let $\bar\mu=f(\cdot,0)\cdot m+\mu$, $\bar
f(x,y)=f(x,y)-f(x,0)$. Then $\bar\mu\in\MM_{0,b}$ and if  $f$
satisfies (\ref{eq1.5}) and (\ref{eq1.6}),   then $\bar f$
satisfies (\ref{eq1.2}) and (\ref{eq1.5}).
%\\
%K:\\
%$\bar F_a(x)=\sup_{|y|\le a}|f(x,y)-f(x,0)|\le\sup_{|y|\le
%a}|f(x,y)|+|f(x,0)|\le 2F_a(x)$\\
%KK\\
Furthermore, $u$ is a solution of the problem $-Lu=\bar
f(x,u)+\bar\mu$ if and only if it is a solution of (\ref{eq1.1}).
Therefore under (\ref{eq1.5}) and (\ref{eq1.6}) there exists a
solution of (\ref{eq1.1}).
\smallskip\\
(ii) If (\ref{eq1.5}) and  (\ref{eq1.6}) are satisfied,   then
{\em Step 2} of the proof of Theorem \ref{th3.5} can be shortened.
Indeed, by (\ref{eq1.6}) and Lemma \ref{lem3.1}(ii),
\[
\|f(\cdot,u_n)-f(\cdot,u_k)\|_{L^1(E;m)}\le \|\mu_n-\mu_k\|_{TV}.
\]
Since
\[
\|\mu_n-\mu\|_{TV}\le \|\mu^{+}-\fch_{F_n}\cdot\mu^{+}\|_{TV}
+\|\mu^{-}-\fch_{F_n}\cdot\mu^{-}\|_{TV}=\mu^{+}(E\setminus F_n)
+\mu^{-}(E\setminus F_n),
\]
we have
\[
\limsup_{n\rightarrow\infty}\|\mu_n-\mu\|_{TV}\le
\mu^{+}(E\setminus F) +\mu^{-}(E\setminus F)=0.
\]
By the above,  $\{f(\cdot,u_n)\}$ is convergent in $L^1(E;m)$. The
rest of the proof runs as the proof of Theorem \ref{th3.5} (see
the reasoning following the statement that  $\{f(\cdot,u_n)\}$ is equi-integrable).
\end{remark}

If (\ref{eq1.10}) is satisfied, then the following Poincar\'e-type
inequality holds true: there exists $c>0$ such that
\begin{equation}
\label{eq2.2} \|u\|_{L^2(E;m)}\le c\EE(u,u)^{1/2},\quad u\in
D(\EE)
\end{equation}
(see \cite[Corollary 2.5]{JS}). Hence, under
(\ref{eq1.10}), $D_e(\EE)=D(\EE)$ and the norms determined  by
$\EE$ and $\EE_1$ are equivalent. It follows in particular that
under the assumptions of  Theorem \ref{th3.5} the solutions of (\ref{eq1.3}) belong to $D(\EE)$.

In general, solutions of (\ref{eq1.3}) are not even locally
integrable (see \cite[Example 5.7]{KR:JFA}). Below we shall see
that a simple condition guaranteeing their integrability is
\[
\|G1\|_{\infty}<\infty.
\]
This condition is sometimes expressed by saying that $E$ is Green-bounded
(see, e.g., \cite{BoB,CZ}; note that in the case where problem
(\ref{eq1.3}) (resp. (\ref{eq1.4})) is considered, $G$ is the Green function
for $\Delta$ (resp. $\Delta^{\alpha/2}$) on $D$. The
Green-bounded domain need not be bounded. For instance, if $L=\Delta$,
then the infinite strip $\{(x,y)\in\BR^2:|x|<a\}$  ($a>0$) in $\BR^2$
is Green-bounded  (see \cite[p. 39]{CZ}).

\begin{lemma}
\label{lem3.3} If \mbox{\rm(\ref{eq2.2})} is satisfied, then $E$
is Green-bounded.
\end{lemma}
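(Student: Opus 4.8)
The plan is to bound the \(0\)-order potential \(G1\) by controlling the resolvents \(G_\alpha 1\) uniformly in \(\alpha>0\) and then passing to the supremum, using \(G1=\sup_{\alpha>0}G_\alpha 1\) from (\ref{eq2.18}). The whole argument hinges on one quantitative consequence of the Poincar\'e inequality (\ref{eq2.2}): that the Markovian \(1\)-resolvent of the constant function stays \emph{uniformly} below \(1\), i.e. there is \(\theta\in(0,1)\) with
\[
\|G_1 1\|_\infty\le\theta.
\]
Granting this, everything else is a short computation with the resolvent equation, and the point of (\ref{eq2.2}) is precisely to supply a spectral gap that pushes the survival functional strictly below its trivial bound \(1\).

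Assuming the bound \(\|G_1 1\|_\infty\le\theta<1\), I would argue as follows. For \(\beta>\alpha\) one has \(G_\beta 1\le G_\alpha 1\) \(m\)-a.e. (the monotonicity recorded before (\ref{eq2.18})), so for every \(\alpha>0\), since \(\alpha+1\ge 1\),
\[
G_{\alpha+1}1\le G_1 1\le\theta\quad m\text{-a.e.}
\]
The resolvent equation gives \(G_\alpha-G_{\alpha+1}=G_\alpha G_{\alpha+1}\), hence, applying both sides to the constant function \(1\) (via the monotone extension (\ref{eq2.17})) and using that \(G_\alpha\) is positivity preserving together with the previous line,
\[
G_\alpha 1=G_{\alpha+1}1+G_\alpha\bigl(G_{\alpha+1}1\bigr)\le\theta+G_\alpha(\theta\cdot 1)=\theta+\theta\,G_\alpha 1 .
\]
By the Markovian property \(\alpha G_\alpha 1\le 1\) we know \(G_\alpha 1\le 1/\alpha<\infty\) \(m\)-a.e., so the term \(\theta\,G_\alpha 1\) may be moved to the left and we obtain \(G_\alpha 1\le\theta/(1-\theta)\) \(m\)-a.e., \emph{uniformly in \(\alpha\)}. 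Letting \(\alpha\downarrow 0\) and invoking (\ref{eq2.18}) yields \(G1=\sup_{\alpha>0}G_\alpha 1\le\theta/(1-\theta)<\infty\), which is exactly Green-boundedness.

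The main obstacle is therefore the establishment of \(\|G_1 1\|_\infty\le\theta<1\) from (\ref{eq2.2}); this is where the real content lies, and it is the step I expect to be delicate. The natural route is to write \(G_1 1=\int_0^\infty e^{-t}P_t 1\,dt\), where \(t\mapsto P_t 1\) is nonincreasing and bounded by \(1\) by sub-Markovianity, so that \(G_1 1\le 1\) with equality only in the conservative case. The inequality (\ref{eq2.2}) means \(\EE(u,u)\ge c^{-2}\|u\|^2_{L^2(E;m)}\), i.e. the semigroup is exponentially stable on \(L^2(E;m)\), \(\|P_t u\|_{L^2(E;m)}\le e^{-t/c^2}\|u\|_{L^2(E;m)}\). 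The crux is to transfer this \(L^2\) decay into a \emph{uniform} (i.e. \(L^\infty\)) lower bound on the killing \(1-P_t 1\), which is what forces \(G_1 1\) away from \(1\); the coupling of the \(L^2\) spectral gap with the \(L^\infty\)-contractivity furnished by the Markovian property is the technically subtle part of the proof, and I would isolate it as a separate estimate before feeding it into the resolvent computation above.
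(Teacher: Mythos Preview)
Your proposal contains a genuine gap: you reduce the lemma to the claim $\|G_1 1\|_\infty\le\theta<1$, but then do not prove it---you only note that it is ``technically subtle'' and that you ``would isolate it as a separate estimate.'' That missing step is the entire content of the lemma. The resolvent bootstrap you write out afterwards (from $\|G_1 1\|_\infty\le\theta$ to $G_\alpha 1\le\theta+\theta\,G_\alpha 1$, hence $\|G1\|_\infty\le\theta/(1-\theta)$) is correct but routine. The difficulty you flag---passing from the $L^2$ spectral decay $\|P_t\|_{L^2\to L^2}\le e^{-t/c^2}$ to a uniform pointwise bound on $P_t 1$ or $G_1 1$, when $1$ need not lie in $L^2(E;m)$---is real, and you offer no mechanism for it. Interpolation with the $L^\infty$-contractivity of $P_t$ does not help, since that only reproduces the trivial bound $P_t 1\le 1$.

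The paper's proof bypasses this $L^2$-to-$L^\infty$ transfer entirely by a structural observation: under (\ref{eq2.2}) the form $\EE^c(u,v):=\EE(u,v)-\frac{1}{2c^2}(u,v)$ is declared to be a symmetric regular Dirichlet form, and since its generator is $L+\frac{1}{2c^2}$ one has $G=G^c_{1/(2c^2)}$ for the associated resolvent $(G^c_\alpha)$. The Markovian property of $\alpha G^c_\alpha$ then yields $G1=G^c_{1/(2c^2)}1\le 2c^2$ in one line. Note that this same observation would also furnish your missing bound, $G_1 1=G^c_{1+1/(2c^2)}1\le \frac{2c^2}{2c^2+1}<1$, making your resolvent iteration superfluous. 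In other words, the ``separate estimate'' you defer is precisely where the idea lives; without something of this kind, the outline is not a proof.
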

\begin{proof}
For the constant $c$ from (\ref{eq2.2}) we set
$\EE^c(u,v)=\EE(u,v)-\frac1{2c^2}(u,v)$, $u,v\in D(\EE)$. Then
$(\EE^c,D(\EE))$ is a regular symmetric Dirichlet form on
$L^2(E;m)$. Obviously,
\begin{equation}
\label{eq.ccc}
\EE(u,v)=\EE^c(u,v)+\frac{1}{2c^2}(u,v),\quad u,v\in D(\EE).
\end{equation}
Let $(G^c_\alpha)_{\alpha>0}$ denote  the resolvent associated
with $(\EE^c,D(\EE))$. From (\ref{eq.ccc}) it follows that
$G=G^c_{(2c^2)^{-1}}$. Hence $ G1=G^c_{(2c^2)^{-1}}1\le 2c^2$
since $(\alpha G^c_\alpha)_{\alpha>0}$ is Markovian.
\end{proof}

\begin{proposition}
\label{prop3.13}
Assume that  $E$  is Green-bounded.
If $u$ is a solution to \mbox{\rm(\ref{eq1.1})}, then $u\in L^1(E;m)$.
\end{proposition}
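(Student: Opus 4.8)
The plan is to exploit the representation of the solution through the Green operator, combined with the self-adjointness of $G$ and the hypothesis $\|G1\|_\infty<\infty$.

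First I would record that, since $u$ is a solution, Definition \ref{def3.1}(a) gives $f(\cdot,u)\in L^1(E;m)$, so that $\nu:=f(\cdot,u)\cdot m+\mu$ is a finite signed diffuse measure, $\nu\in\MM_{0,b}$, and by Proposition \ref{prop.equiv}(ii) we have $u=G\nu$ $m$-a.e. Writing $\lambda:=|f(\cdot,u)|\cdot m+|\mu|$, the measure $\lambda$ belongs to $\MM_{0,b}^+$ and $\|\lambda\|_{TV}=\|f(\cdot,u)\|_{L^1(E;m)}+\|\mu\|_{TV}<\infty$. Since $G$ is positivity preserving and additive on positive measures, $|u|=|G\nu|\le G|\nu|\le G\lambda$ $m$-a.e., and hence $\int_E|u|\,dm\le\int_E G\lambda\,dm$. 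It therefore suffices to bound $\int_E G\lambda\,dm$.

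To estimate this integral I would use the symmetry of $G$ encoded in Lemma \ref{lm2.n2}, which is essentially the Fubini identity $\langle\lambda,\widetilde{G\eta}\rangle=\int_E\eta\,G\lambda\,dm$, valid for positive $\eta\in L^1(E;m)$ with $\|G\eta\|_\infty<\infty$. The difficulty is that the natural choice $\eta=1$ need not lie in $L^1(E;m)$ when $m(E)=\infty$, so I would approximate. Using $\sigma$-finiteness of $m$, pick Borel sets $B_n\uparrow E$ with $m(B_n)<\infty$ and set $\eta_n=\mathbf{1}_{B_n}$. Then $\eta_n\in L^1(E;m)$, and since $0\le\eta_n\le1$ and $G$ is positivity preserving, $\|G\eta_n\|_\infty\le\|G1\|_\infty<\infty$; moreover $\eta_n\cdot m\in\MM_{0,b}^+$, so $G\eta_n$ admits a quasi-continuous version by Lemma \ref{lem2.3}. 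Applying Lemma \ref{lm2.n2} with $\mu=\lambda$ and $\eta=\eta_n$ yields $\langle\lambda,\widetilde{G\eta_n}\rangle=\int_E\eta_n\,G\lambda\,dm$.

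Finally I would let $n\to\infty$. On the left, $G\eta_n\le\|G1\|_\infty$ $m$-a.e., hence $\widetilde{G\eta_n}\le\|G1\|_\infty$ q.e., which gives $\langle\lambda,\widetilde{G\eta_n}\rangle\le\|G1\|_\infty\|\lambda\|_{TV}$ for every $n$ (here the diffuseness of $\lambda$ makes the q.e. bound effective). On the right, $\eta_n\uparrow1$ and $G\lambda\ge0$, so by monotone convergence $\int_E\eta_n\,G\lambda\,dm\uparrow\int_E G\lambda\,dm$. Combining these yields $\int_E G\lambda\,dm\le\|G1\|_\infty\|\lambda\|_{TV}<\infty$, whence $u\in L^1(E;m)$. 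The only genuine obstacle is the passage from the formal identity with $\eta=1$ to a rigorous one, which the monotone approximation $\eta_n\uparrow1$ together with the uniform bound $\|G\eta_n\|_\infty\le\|G1\|_\infty$ resolves.
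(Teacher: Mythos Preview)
Your proof is correct and follows essentially the same approach as the paper's: both exploit the duality of $G$, the bound $G\eta_n\le G1\le\|G1\|_\infty$, and an approximation by indicator functions of finite-measure sets. The paper's execution is slightly more direct---it plugs $\eta_n=\mathbf{1}_{F_n}\operatorname{sign}(u)$ (with $F_n$ compact, $F_n\uparrow E$) straight into the defining identity (\ref{eq2.3}) so that the left side is $\int_{F_n}|u|\,dm$ immediately, bypassing the detour through Proposition \ref{prop.equiv}(ii) and Lemma \ref{lm2.n2}---but the substance is identical.
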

\begin{proof}
To see this it is enough to consider an increasing sequence of compact sets $\{F_n\}$ such that $\bigcup_{n=1}^{\infty}F_n=E$, take  $\eta_n=\fch_{F_n}\mbox{sign}(u)$  as  test functions in  (\ref{eq2.3}), and use (\ref{eq3.33}) and Fatou's lemma.
\end{proof}
%\begin{remark}
%Zalozenie (\ref{eq1.5}) mozna oslabic? p. \cite{K:PA} - wystarczy zalozyc, %ze dla kazdego $a\ge0$, $F_a\in qL^1(E;m)$?
%\end{remark}

%As already mentioned in the introduction, in the special case (\ref{eq1.3}) of problem (\ref{eq1.1}),
%Theorem \ref{th3.5} was  proved in \cite{GM} for $\mu\in L^1(D;dx)$ and $f$ satisfying (\ref{eq1.5}).
%In \cite{OP} the existence result of \cite{GM} was generalized to $\mu\in\MM_{0,b}$, ... oraz $f$ spelniajacym  %(\ref{eq1.2}) i warunek slabszy niz (\ref{eq1.5}).

%Skomentowac modyfikacje dowodu (zastosowanie rozkladu miary z \cite{KR:BPAN})
%Jezeli $\mu=g+\nu$, to w Step 1 dowodzimy dla $\nu$, a w Step 2 przyblizamy $g$ przez
%$g_n$; w rozumowaniu po (\ref{eq3.8}) wystarczy rozpatrywac $A^{g^+_n}, %A^{g^-_n}$,
%ktore maja prostsza strukture niz $A^{\mu^+_n},A^{\mu^+_n}$. Niestety funkcjonalow typu $A^{\mu}$
%trzeba nadal uzywac w Lemma \ref{lem3.1}.

\section{Applications}

In this section we provide some examples of  local and  nonlocal symmetric  transient regular Dirichlet forms satisfying  condition (\ref{eq1.7}). Before proceeding, we make some general comments on conditions (\ref{eq1.10}) and (\ref{eq1.8}).

Since (\ref{eq1.10}) implies (\ref{eq2.2}), it is clear that  (\ref{eq1.10}) implies (\ref{eq1.7}). That the absolute continuity condition  (\ref{eq1.8}) (or, equivalently, condition (\ref{eq1.9})) implies (\ref{eq1.7}) follows from \cite[Propositions 2.4 and 2.11]{K:PA}. We include a direct proof of this fact  for completeness of exposition.

\begin{proposition}
Condition \mbox{\rm(\ref{eq1.8})} implies \mbox{\rm(\ref{eq1.7})}.
\end{proposition}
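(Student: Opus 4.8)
The plan is to deduce (\ref{eq1.7}) from the stronger assertion that the embedding $D_e(\EE)\hookrightarrow L^1(E;g\cdot m)$ is compact, where $g$ is the bounded strictly positive function from the transience inequality (\ref{eq2.5}). Indeed, convergence in $L^1(E;g\cdot m)$ forces $m$-a.e. convergence of a subsequence, and since $\int_E g\,dm<\infty$ it suffices to show that a sequence $\{u_n\}\subset D_e(\EE)$ with $C:=\sup_n\EE(u_n,u_n)<\infty$ is Cauchy in $g\cdot m$-measure. First I would reduce to bounded functions: for fixed $k$ the truncations $w_n:=T_k(u_n)$ satisfy $|w_n|\le k$ and, by (\ref{eq2.22}), $\EE(w_n,w_n)\le\EE(u_n,u_n)\le C$, while (\ref{eq2.5}) and Chebyshev's inequality give $(g\cdot m)(\{|u_n|>k\})\le k^{-1}\int_E|u_n|g\,dm\le C/k$ uniformly in $n$. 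Hence, once each family $\{w_n\}_n$ is shown to be relatively compact in $g\cdot m$-measure, a diagonal extraction together with this uniform tail bound yields that $\{u_n\}$ is Cauchy in $g\cdot m$-measure.

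To treat the bounded sequence $\{w_n\}$ I would pass it through the low-pass filter $\beta G_\beta$. Fix a sequence $\beta_j\uparrow\infty$ and extract a subsequence along which $w_n\rightharpoonup w$ weakly-$*$ in $L^\infty(E;m)=(L^1(E;m))^*$, which is possible since $\|w_n\|_\infty\le k$ and $L^1(E;m)$ is separable. Here condition (\ref{eq1.8}) enters decisively: by (\ref{eq2.6}) the kernel $r_\beta(x,\cdot)$ lies in $L^1(E;m)$ for $m$-a.e. $x$ (because $\int_E r_\beta(x,y)\,m(dy)=G_\beta 1(x)\le\beta^{-1}$), so for each fixed $\beta$ and $m$-a.e. $x$,
\[
\beta G_\beta w_n(x)=\int_E\beta\, r_\beta(x,y)w_n(y)\,m(dy)\longrightarrow\int_E\beta\, r_\beta(x,y)w(y)\,m(dy)=\beta G_\beta w(x).
\]
Thus weak-$*$ convergence is upgraded to genuine $m$-a.e., hence $g\cdot m$-measure, convergence of the filtered sequence $\{\beta G_\beta w_n\}_n$ --- this is exactly the compactness that (\ref{eq1.8}) supplies and that an energy bound alone cannot provide.

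It then remains to control the error $w_n-\beta G_\beta w_n$ uniformly in $n$. For $w\in L^2(E;m)$ the spectral theorem gives
\[
\|(I-\beta G_\beta)w\|^2_{L^2(E;m)}=\int_0^\infty\frac{\lambda^2}{(\beta+\lambda)^2}\,d\|E_\lambda w\|^2\le\beta^{-1}\EE(w,w),
\]
whence, using $g\in L^2(E;m)$, one obtains $\|(I-\beta G_\beta)w\|_{L^1(E;g\cdot m)}\le\|g\|_{L^2}\,\beta^{-1/2}\EE(w,w)^{1/2}$, which is small uniformly over $\{\EE(w,w)\le C\}$. Combining this bound (with $w=w_n$) and the measure convergence of the previous step through the splitting $w_n-w_{n'}=(w_n-\beta G_\beta w_n)-(w_{n'}-\beta G_\beta w_{n'})+\beta G_\beta(w_n-w_{n'})$, and letting first $n,n'\to\infty$ and then $\beta=\beta_j\to\infty$, shows that $\{w_n\}$ is Cauchy in $g\cdot m$-measure, which by the first paragraph proves (\ref{eq1.7}).

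The main obstacle is that elements of the extended space $D_e(\EE)$, and even their bounded truncations $w_n$, need not belong to $L^2(E;m)$ when $m(E)=\infty$ (e.g. a truncated Green potential for $-\Delta$ on $\BR^d$ with $d=3,4$), so the spectral estimate above does not apply to $w_n$ directly. I would circumvent this by approximating $w_n$ in the $\EE$-norm by functions of $D(\EE)=D_e(\EE)\cap L^2(E;m)$, which is $\EE$-dense in $D_e(\EE)$, truncating the approximants at level $k$ to keep them bounded and within $D(\EE)$, proving the $L^1(E;g\cdot m)$-bound for these square-integrable approximants, and passing to the limit using the boundedness of $\beta G_\beta$ on $V$ together with the embedding $V\hookrightarrow L^1(E;g\cdot m)$. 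The conceptual crux, however, is the second step: it is precisely the $L^1$-kernel representation furnished by (\ref{eq1.8}) that converts weak-$*$ compactness into pointwise convergence, and this is the mechanism replacing the Rellich-type compact embedding that is available under (\ref{eq1.10}).
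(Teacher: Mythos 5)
Your argument is correct and rests on exactly the same mechanism as the paper's proof: truncate to obtain a uniformly bounded sequence with uniformly bounded energy, extract a weak-$*$ convergent subsequence in $L^\infty(E;m)$, use the $L^1$-kernel representation supplied by (\ref{eq1.8}) to upgrade weak-$*$ convergence to pointwise convergence of the smoothed sequence $\beta G_\beta w_n$, control the error $w_n-\beta G_\beta w_n$ by $\beta^{-1/2}\EE(w_n,w_n)^{1/2}$ in an $L^2$-type norm, and finish with a diagonal extraction. The one genuine point of divergence is the obstacle you correctly single out at the end: $T_k(u_n)$ need not belong to $L^2(E;m)$, so the resolvent error estimate does not apply to it directly. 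You repair this by approximating in the $\EE$-norm by truncated elements of $D(\EE)$ and measuring the error in $L^1(E;g\cdot m)$ with the transience weight $g$; this is workable (the approximants converge to $T_k(u_n)$ strongly in $V$ and $m$-a.e., and Fatou transfers the bound), but it costs an extra limiting procedure whose details you leave somewhat open. The paper's device is shorter: fix once and for all a bounded, $m$-a.e.\ strictly positive $v\in D(\EE)$ and set $w^k_n=v\cdot T_k(u_n)$, which by \cite[Corollary 1.5.1]{FOT} lies in $D_e(\EE)$ with energy controlled by $\EE(u_n,u_n)$, $k$ and $v$, and which is automatically in $L^2(E;m)$ since $|w^k_n|\le k|v|$; the estimate $\|\alpha G_\alpha w^k_n-w^k_n\|^2_{L^2(E;m)}\le\alpha^{-1}\EE(w^k_n,w^k_n)$ (from \cite[Lemma 1.3.4]{FOT}, equivalently your spectral computation) then applies without any approximation, and $m$-a.e.\ convergence of $u_n$ is recovered from that of $v\cdot T_k(u_n)$ because $v>0$ $m$-a.e. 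Both patches buy the same thing; the multiplication by $v$ simply replaces your density argument by a one-line reduction to $D(\EE)$.
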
}
\begin{proof}
Assume that $\{u_n\}\subset D_e(\EE)$ and $\sup_{n\ge1}\EE(u_n,u_n)<\infty$.
Choose $v\in D(\EE)$ such that $\|v\|_{\infty}<\infty$ and $v>0$ $m$-a.e., and for $k>0$ set $w^k_n=v\cdot T_ku_n$.
By \cite[Corollary 1.5.1]{FOT}, $w^k_n\in D_e(\EE)$ and
\begin{equation}
\label{eq3.29}
\EE(w^k_n,w^k_n)\le\|v\|_{\infty}\EE(u_n,u_n)
+k\EE(v,v).
\end{equation}
Clearly
\[
(w^k_n-\alpha G_{\alpha}w^k_n,w^k_n-\alpha G_{\alpha}w^k_n)
=(w^k_n,w^k_n-\alpha G_{\alpha}w^k_n) +(\alpha G_{\alpha}w^k_n,\alpha G_{\alpha}w^k_n-w^k_n).
\]
By \cite[Lemma 1.3.4]{FOT}, $\alpha(w^k_n,w^k_n-\alpha G_{\alpha}w^k_n)\le\EE(w^k_n,w^k_n)$ for every $\alpha>0$. Moreover,
\begin{align*}
(\alpha G_{\alpha}w^k_n,\alpha G_{\alpha}w^k_n-w^k_n)
&=\alpha(G_{\alpha}w^k_n,\alpha G_{\alpha}w^k_n) -\EE_{\alpha}(G_{\alpha}w^k_n,\alpha G_{\alpha}w^k_n)\\
&=-\EE(G_{\alpha}w^k_n,\alpha G_{\alpha}w^k_n)\le0.
\end{align*}
By the above estimates,
$\|\alpha G_{\alpha}w^k_n-w^k_n\|^2_{L^2(E;m)}\le\alpha^{-1}\EE(w^k_n,w^k_n)$ for $\alpha>0$,
which when combined with (\ref{eq3.29}) shows that there is a constant $c(k,v)$ depending only on $k$ and $v$ such that
\begin{equation}
\label{eq2.10} \|\alpha G_{\alpha}
w_n^k-w_n^k\|^2_{L^2(E;m)}\le\alpha^{-1}c(k,v).
\end{equation}
Since $\alpha G_{\alpha}1\le1$, from (\ref{eq2.6}) it follows that $\int_Er_{\alpha}(x,y)\,m(dy)\le\alpha^{-1}$. Hence $r_{\alpha}(x,\cdot)\in L^1(E;m)$ for every $x\in E$. Furthermore,
since $\sup_{n\ge1}\|w^k_n\|_{\infty}\le k\|v\|_{\infty}<\infty$, there is
a subsequence $(n')\subset(n)$ such that $\{w^k_{n'}\}$ converges
weakly${}^*$ in $L^{\infty}(E;m)$ to some $w\in L^\infty(E;m)$, i.e. $\int_Ew^k_{n'}(x)\eta(x)\,m(dx)\rightarrow\int_Ew(x)\eta(x)\,m(dx)$ for
every $\eta\in L^1(E;m)$.
%p. Fonseca-Leoni, Proposition 2.46(iv) (Prace1, ksiazki)
In particular, for every $x\in E$,
\begin{align}
\label{eq3.32}
G_\alpha w^k_{n'}(x)&=\int_Er_\alpha(x,y)w^k_{n'}(y)\,m(dy)\nonumber\\
&\quad\rightarrow
\int_Er_\alpha(x,y)w(y)\,m(dy)=G_\alpha w(x).
\end{align}
Since $|\alpha G_{\alpha}w^k_{n}|\le k\alpha G_\alpha |v|$, it follows from (\ref{eq3.32}) that the sequence$\{\alpha G_\alpha w^k_{n'}\}$ converges
 in $L^2(E;m)$ for any fixed $\alpha>0$, $k>0$. This and
(\ref{eq2.10}) imply that  there exists a subsequence
$(n'')\subset(n')$ such that $\{w^k_{n''}\}$ converges in
$L^2(E;m)$. Using the diagonal procedure one can  find a further  subsequence
$(n''')\subset(n'')$ such that   $\{u_{n'''}\}$ converges $m$-a.e. on $E$.
\end{proof}

\begin{example}
\label{ex3.4}  Let $D\subset\BR^d$, $d\ge1$, be a nonempty  bounded open
set, and let $a_{ij}:D\rightarrow\BR$ be locally integrable
functions such that $a_{ij}(x)=a_{ji}(x)$ for $x\in D$,
$i,j=1,\dots,d$, and for some $\lambda>0$,
\[
\sum^d_{i,j=1}a_{ij}(x)\xi_i\xi_j\ge\lambda|\xi|^2,\quad
x\in D,\,\xi=(\xi_1,\dots,\xi_d)\in\BR^d.
\]
(i) (Dirichlet boundary conditions) The form defined by
\begin{equation}
\label{eq3.16} \EE(u,v)=\sum^d_{i,j=1}\int_D\frac{\partial
u}{\partial x_i}(x)\frac{\partial v}{\partial x_j}(x) a_{ij}(x)\,dx,\quad u,v\in D(\EE)%:= H^1_0(D)
\end{equation}
with $D(\EE)=H^1_0(D)$ is a regular symmetric Dirichlet form on $L^2(D)$  (see, e.g., \cite[Section 3.1]{FOT}). The generator $L$ of $(\EE,D(\EE))$ is of the form
\[
Lu=\sum^d_{i,j=1}\frac{\partial}{\partial x_j}\Big(a_{ij}\frac{\partial u }{\partial x_i}\Big),\quad u\in D(L).
\]
The form $(\EE,D(\EE))$  is transient by    Poincar\'e's
inequality, and (\ref{eq1.10}) is satisfied by Rellich's theorem.
%(i) Tranzytywnosc wynika z nierownosci Poincar\'e (p.
%\cite[Example 1.5.1]{FOT} i uwaga w przykladzie nastepnym). By the
%Rellich theorem (p. Marcinkowska, s. 215), the embedding
%$H^1_0\hookrightarrow L^2(D;dx)$ is compact ..\\
Also note that by classical results (see \cite{A}),
assumption (\ref{eq1.9}) (and hence (\ref{eq1.8})) is satisfied as well.
Therefore Theorem \ref{th3.5} applies to the  Dirichlet problem
\begin{equation}
\label{eq3.25} -Lu=f(\cdot, u)+\mu\quad\mbox{in }D, \qquad
u=0\quad\mbox{on }\partial D.
\end{equation}
Note that  we impose no regularity  assumption  on the boundary
$\partial D$ of $D$.  Note also that by Poincar\'e's
inequality, $D_e(\EE)=H^1_0(D)$. Consequently,
$T_k(u)\in H^1_0(D)$ for every $k>0$. Furthermore,  since
$D$ is Green-bounded (see, e.g., \cite[Theorem 1.17]{CZ}),
$u\in L^1(D;dx)$ by Proposition \ref{prop3.13}.
\smallskip\\
(ii) (Neumann boundary conditions) Assume additionally  that
$\partial D$ is Lipschitz. Consider the form $\EE$ defined by
(\ref{eq3.16}), but with domain $H^1(D)$. Then $(\EE,H^1(D))$ is a
regular symmetric Dirichlet form on $L^2(\bar D;dx)$ with $\bar
D=D\cup\partial D$ (see \cite[Example 4.5.3]{FOT}), and clearly so
is $(\EE_{\lambda},H^1(D))$ with $\lambda\ge0$. Moreover, if
$\lambda>0$, then  $(\EE_{\lambda},H^1(D))$ is transient because
$D$ is Green-bounded (see Lemma \ref{lem3.3}). The generator
$L^{\lambda}$ of $(\EE_{\lambda},H^1(D))$ is equal to $L-\lambda$,
where $L$ is the generator of $(\EE,H^1(D))$. By
Rellich's theorem, $H^1(D)\hookrightarrow L^2(D;dx)$ is compact,
so the results of the paper apply to equation (\ref{eq1.1}) with
$L$ replaced by the operator $L-\lambda$ defined above. A solution
$u$ to such equation can be viewed as a solution to the Neumann
problem
\[
-Lu=-\lambda u+f(\cdot, u)+\mu\quad\mbox{in }D,\qquad \frac{\partial u}{\partial{(a\cdot\mathbf{n}})}=0\quad\mbox{on }\partial D,
\]
where  ${\mathbf{n}}$ denotes the unit  outward normal to
$\partial D$.
\end{example}

%The next example is taken from \cite{JS}.
\begin{example}
\label{ex3.5}
%Patrz \cite[Example 2.8]{JS} lub \cite[Theorem 6.4.17]{J3}).\\
Assume that $f$ satisfies (\ref{eq1.2}) and (\ref{eq1.5}). Let  $\psi:\BR^d\rightarrow[0,\infty)$ be a  continuous negative
definite function in the sense of Schoenberg (see \cite[Chapter
3]{J1} for the definition). Denote by $H^{\psi,1}(\BR^d)$ the
space
\[
H^{\psi,1}(\BR^d)=\{u\in L^2(\BR^d):\|u\|_{\psi,1}<\infty\},
\]
where
\[
\|u\|^2_{\psi,1}=\int_{\BR^d}(1+\psi(x))|\hat u(x)|^2\,dx
\]
and $\hat u$ stands for the Fourier transform of $u$.
It is  known (see \cite[Example 1.4.1]{FOT} or  \cite[Example
4.1.28]{J1}) that $(\EE,D(\EE))$ defined as
\begin{equation}
\label{eq3.27} \EE(u,v)=\int_{\BR^d}\hat u(x)\overline{\hat
v(x)}\psi(x)\,dx,  \quad u,v\in D(\EE):=H^{\psi,1}(\BR^d)
\end{equation}
is a symmetric regular Dirichlet form on $L^2(\BR^d;dx)$.  By
\cite[Example 1.5.2]{FOT}, it is transient if and only if
\begin{equation}
\label{eq3.28}
\frac{1}{\psi}\in L^1_{loc}(\BR^d).
\end{equation}
(i) Let $\nu_t$, $t>0$, be a probability measure on $\BR^d$ such that $\hat\nu_t(x)=e^{-t\psi(x)}$, $x\in\BR^d$. Then the semigroup $(P_t)_{t>0}$ associated with $\EE$ has the form $P_tf(x)=\int_{\BR^d}f(x+y)\,\nu_t(dy)$ for $f\in L^2(\BR^d;dx)\cap\BB_b(E)$ (see \cite[Example 1.4.1]{FOT}).
It follows in particular that if $\nu_t$ are absolutely continuous with respect to the Lebesgue measure, then (\ref{eq1.9}) is satisfied. For instance, this is the case when $\psi(\xi)=|\xi|^{\alpha}$, $\xi\in\BR^d$, with $\alpha\in(0,2]$ (see \cite[Example 1.4.1]{FOT}). For such $\psi$, the operator corresponding to $\EE$ is the fractional Laplacian $\Delta^{\alpha/2}$. If $\alpha<d$, then (\ref{eq3.28}) is satisfied, so the form $\EE$ is transient. Therefore Theorem \ref{th3.5} applies to the equation
\[
-\Delta^{\alpha/2}=f(\cdot,u)+\mu\quad\mbox{in }\BR^d
\]
with $\alpha\in(0,2\wedge d)$. If $f$ satisfies (\ref{eq1.2}) and $u$ is a solution to the above equation, then $T_k(u)\in D_e(\BR^d)$ for any $k>0$. For the characterisation of $D_e(\EE)$ see \cite[Example 1.5.2]{FOT}.
Finally, let us note that some general conditions ensuring (\ref{eq1.9}) are found in \cite[Section 27]{S}.
\smallskip\\
(ii) Let $D\subset\BR^d$ be a nonempty bounded open set,  and let
$(\EE^D,D(\EE^D))$ denote the part of $(\EE,D(\EE))$ on $D$, i.e.,
\begin{equation}
\begin{cases}
\label{eq3.15} D(\EE^D)=\{u\in D(\EE):\tilde u=0\mbox{ q.e. on
}\BR^d\setminus D\}, \smallskip\\ \EE^D(u,v)=\EE(u,v),\quad u,v\in D(\EE^D)
\end{cases}
\end{equation}
(here $\tilde u$ denotes a quasi-continuous version of $u$). By
\cite[Theorem 4.4.3]{FOT}, $(\EE^D,D(\EE^D))$ is a  symmetric
regular Dirichlet form on $L^2(D;dx)$ and
$D(\EE^D)=H^{\psi,1}_0(D)$, where $H^{\psi,1}_0(D)$ denotes the
closure of $C^{\infty}_c(D)$ in $H^{\psi,1}(\BR^d)$. If
(\ref{eq3.28}) is satisfied, then the form $(\EE^D,D(\EE^D))$ is
transient by \cite[Theorem 4.4.4]{FOT}.
Let $L$ denote the generator of $(\EE,D(\EE))$ and $L^D$  denote
the generator of $(\EE^D,D(\EE^D))$. By virtue of (\ref{eq3.15}),
the solution $u$ of (\ref{eq1.1}) with $E=D$ and operator $L^D$ can be
interpreted as a solution of the  Dirichlet problem
\begin{equation}
\label{eq3.31}
-Lu=f(\cdot, u)+\mu\quad\mbox{in }D,\qquad u=0\quad\mbox{on }\BR^d\setminus D.
\end{equation}
By \cite[Remark 3.10.6]{J1}, the embedding of
$V:=H^{\psi,1}_0(D)$ (equipped with the norm $\|\cdot\|_{\psi,1}$)
into $L^2(D;dx)$ is compact if and only if
\begin{equation}
\label{eq3.30}
\lim_{|\xi|\rightarrow\infty}\psi(\xi)=\infty.
\end{equation}
Therefore, if (\ref{eq3.28}) and (\ref{eq3.30}) are  satisfied,
then by Theorem \ref{th3.5} there exists a solution $u$ to
(\ref{eq3.31}). Since (\ref{eq1.10}) implies (\ref{eq2.2}), $D_e(\EE^D)=D(\EE^D)$. Consequently,
$T_k(u)\in H^{\psi,1}_0(D)$ for $k>0$.

For instance, (\ref{eq3.28}) and (\ref{eq3.30}) are satisfied for $\psi$ defined as
$\psi(\xi)=|\xi|^{\alpha}$, $\xi\in\BR^d$, with $\alpha\in(0,2\wedge d)$. Since then $L=\Delta^{\alpha/2}$, equation (\ref{eq1.1}) with
$L^D$ can be interpreted as (\ref{eq1.4}). Note also that, because
$D$ is bounded, it is Green-bounded (see, e.g., \cite[(2.4)]{BoB}).
Therefore, by Proposition \ref{prop3.13}, if $u$ is a solution to
(\ref{eq1.4}), then $u\in L^1(D;dx)$.
%Uwaga. Poniewaz zakladamy, ze $m(E)<\infty$, to jezeli zachodzi
%(\ref{eq2.2}), to forma jest juz transient, bo dla $g\equiv1$ mamy
%$\int_Eug\le m(E)^{1/2}(u,u)^{1/2}_{L^2}\le C\EE(u,u)^{1/2}$.
%Dlatego w \cite[Example 2.8]{JS} w istocie nie trzeba sprawdzac
%tranzytywnosci, a tylko slabszy warunek $\EE(u,u)=0$, to $u=0$ (i
%korzystamy wtedy z uogolnienia Corollary 2.5, tzn. \cite[Theorem 2.7]{JS}; %p. \cite[Theorem 6.4.17]{J3}).
%Przyklady: $\psi(\xi)=\xi^{\alpha}$,
%$\psi(\xi)=|\xi|^{\alpha}+|\xi|^{\beta}$,
%$\psi(\xi)=|\xi_1|^{1/a_1}\cdot\ldots\cdot|\xi_d|^{1/a_d}$ (p.
%Farkas-Jacob-Schilling, Dissert. Math. 393, przyklady 1.1.12,
%1.1.13), przyklady z \cite{KR:JFA}.
Other examples of $\psi$ satisfying (\ref{eq3.28}) and
(\ref{eq3.30}) are found in  \cite[Chapter 3]{J1}.
\end{example}

% ------------------------------------------------------------------------

%\subsection*{Acknowledgment}
%Many thanks to our \TeX-pert for developing this class file.

% ------------------------------------------------------------------------

\begin{thebibliography}{25}

\bibitem{A}
Aronson, D.G.: {\em Non-Negative Solutions of
Linear Parabolic Equations}. Ann. Scuola Norm. Sup. Pisa {\bf
22}, 607--693 (1968)

\bibitem{BP}
Baras, P., Pierre, M.: {\em Singularités \'eliminables pour des
\'equations semi-lin\'eaires}. Ann. Inst. Fourier (Grenoble) {\bf 34},
185--206 (1984)

\bibitem{BB1}
B\'enilan, P., Brezis, H.: {\em Nonlinear problems related to the
Thomas-Fermi equation}. {\em  J. Evol. Equ.} {\bf 3},
673--770 (2003)

\bibitem{BG}
Blumenthal, R.M., Getoor, R.K.: {\em Markov Processes and Potential
Theory}. Academic Press, New York and London (1968)

\bibitem{BGO}
Boccardo, L., Gallou\"et, T.,  Orsina, L.: {\em Existence and uniqueness of
entropy solutions for nonlinear elliptic equations with measure
data}. Ann. Inst. H. Poincar\'e Anal. Non Lin\'eaire {\bf 13}, 539--551 (1996)

\bibitem{BoB}
Bogdan, K., Byczkowski, T.: {\em Potential theory of Schr\"odinger operator based
on fractional Laplacian}. Probab. Math. Statist. {\bf 20}, 294--335 (2000)

\bibitem{BB}
Brezis H., Browder, F.E.: {\em Strongly nonlinear elliptic boundary value problems}.
Ann. Scuola Norm. Sup. Pisa Cl. Sci. (4) {\bf 5}, no. 3, 587--603 (1978)

\bibitem{BMP}
Brezis, H., Marcus, M.,  Ponce, A.C.: {\em Nonlinear elliptic equations
with measures revisited}. In: Mathematical Aspects of Nonlinear
Dispersive Equations (J. Bourgain, C. Kenig, S. Klainerman, eds.),
Annals of Mathematics Studies, {\bf 163}, Princeton University
Press, Princeton, NJ, 55--110 (2007)

\bibitem{BS}
Br\'ezis, H., Strauss,  W.A.: {\em Semi-linear second-order elliptic
equations in L1}. J. Math. Soc. Japan {\bf 25}, 565--590 (1973)

\bibitem{CZ}
Chung, K.L., Zhao, Z.: {\em From Brownian Motion to Schr\"odinger's
Equation}. Springer, Berlin Heidelberg (1995)

\bibitem{DMOP}
Dal Maso, G., Murat, F., Orsina, L.,  Prignet, A.: {\em Renormalized
Solutions of Elliptic Equations with General Measure Data}.
Ann. Scuola Norm. Sup. Pisa Cl. Sci. (4) {\bf 28}, 741--808 (1999)

\bibitem{DM}
Dellacherie, C., Meyer, P.-A.: {\em Probabilities and Potential C.
Potential theory for discrete and continuous semigroups}.
North-Holland Publishing Co., Amsterdam (1988)

\bibitem{DPP}
Dupaigne, L., Ponce, A.C., Porretta, A.: {\em Elliptic equations with
vertical asymptotes in the nonlinear term}. J. Anal. Math.
{\bf 98}, 349--396 (2006)
\bibitem{FL}
Fonseca, I., Leoni, G.: {\em Modern methods in the calculus of variations: $L^p$ spaces}.
Springer, New York (2007)

\bibitem{FOT}
Fukushima, M., Oshima, Y., Takeda, M.: {\em Dirichlet Forms and
Symmetric Markov Processes.  Second revised and extended edition}.
Walter de Gruyter, Berlin (2011)

\bibitem{GM}
Gallou\"{e}t, T., Morel, J.-M.:  {\em Resolution of a semilinear equation in L1}.
Proc. Roy. Soc. Edinburgh Sect. A {\bf 96}, 275--288 (1984)

\bibitem{J1}
Jacob, N.: {\em Pseudo-Differential Operators and Markov Processes.
Vol. I: Fourier Analysis and Semigroups}. Imperial College Press,
London (2001)

\bibitem{JS}
Jacob, N.,  Schilling, R.: {\em  On a Poincar\'e-type inequality for
energy forms in $L^p$}. Mediterr. J. Math. {\bf 4},  33--44 (2007)

\bibitem{KPU}
Karlsen, K.H., Petitta, F., Ulusoy, S.: {\em A duality approach to the
fractional Laplacian with measure data}.  Publ. Mat. {\bf 55}
151--161 (2011)

\bibitem{K:PA}
Klimsiak, T.: {\em Right Markov processes and systems of semilinear
equations with measure data}. Potential Anal. {\bf 44},
373--399 (2016)

\bibitem{K:CVPDE}
Klimsiak, T.: {\em Reduced measures for semilinear elliptic equations
involving Dirichlet operators}.  Calc. Var. Partial
Differential Equations {\bf 55}, no. 4, Art. 78, 27 pp.   (2016)

\bibitem{KR:JFA}
Klimsiak, T., Rozkosz, A.: {\em Dirichlet forms and semilinear elliptic equations
with measure data}. J. Funct. Anal. {\bf 265}, 890--925 (2013)

\bibitem{KR:NoD}
Klimsiak, T.,  Rozkosz, A.: {\em Renormalized solutions of semilinear
equations involving measure data and operator corresponding to
Dirichlet form}. NoDEA Nonlinear Differential Equations Appl.
{\bf 22}, 1911--1934 (2015)

\bibitem{KR:CM}
Klimsiak, T., Rozkosz, A.: {\em Semilinear elliptic equations with
measure data and quasi-regular Dirichlet forms}. Colloq.
Math. {\bf 145}, 35--67 (2016)

\bibitem{KR:BPAN}
Klimsiak, T., Rozkosz, A.: {\em On the structure of bounded smooth
measures associated with a quasi-regular Dirichlet form}.
Bull. Polish Acad. Sci. Math. {\bf 65}, 45--56 (2017)

\bibitem{KMS}
Kuusi, T.,  Mingione, G., Sire, Y.: {\em Nonlocal equations with measure
data}. Commun. Math. Phys. {\bf 337}, 1317--1368 (2015)

\bibitem{M}
Meyer, P.-A.: {\em Fonctionelles multiplicatives et additives de Markov}.
Ann. Inst. Fourier (Grenoble) {\bf 12}, 125--230 (1962)

\bibitem{OP}
Orsina, L., Ponce, A.C.:  {\em Semilinear elliptic equations and systems with diffuse measures}. J. Evol. Equ. {\bf 8}, 781--812 (2008)

\bibitem{OP2}
Orsina, L.,  Ponce, A.C.:  {\em Strong maximum principle for Schr\"odinger operators with singular potential}. Ann. Inst. H. Poincar\'e Anal. Non Lin\'eaire {\bf 33}, 477--493 (2016)

\bibitem{Pe}
Petitta, F.: {\em Some remarks on the duality method for integro-differential
equations with measure data}. Adv. Nonlinear Stud. {\bf 16},
115--124 (2016).

\bibitem{Po}
Ponce, A.C.: {\em Elliptic PDEs, Measures and Capacities}. Tracts
in Mathematics 23, European Mathematical Society (EMS),
Z\"{u}rich (2016)

\bibitem{R}
Roub\'i\v{c}ek, T.: {\em Nonlinear partial differential equations
with applications. Second edition}.
%International Series of Numerical Mathematics, {\bf 153}.
Birkh\"auser/Springer Basel AG, Basel (2013)

\bibitem{S}
Sato, K.: {\em L\'evy processes and infinitely divisible
distributions}. Cambridge University Press, Cambridge (1999)
\end{thebibliography}
\end{document}